\newtheorem{mainthm}{Theorem}[]
\newtheorem{thm}{Theorem}[section]
\newtheorem*{thm*}{Theorem} 
\newtheorem{cor}[thm]{Corollary}
\newtheorem{lem}[thm]{Lemma}
\newtheorem{prop}[thm]{Proposition}
\theoremstyle{definition}
\newtheorem{defin}[thm]{Definition}\newtheorem{mainrem}[mainthm]{Remark}
\newtheorem*{rems*}{Remarks}
\theoremstyle{remark}
\newtheorem{rem}[thm]{Remark}
\newcommand{\CP}{\mathbb{C\mkern1mu P}}               
\newcommand{\C}{\mathbb{C}}
\newcommand{\R}{\mathbb{R}}
\newcommand{\Hess}{\mathrm{Hess}}
\newcommand{\Sph}{\mathbb{S}}
\newcommand{\Par}{\mathrm{Par}}
\DeclareMathOperator{\GL}{GL}
\DeclareMathOperator{\Ric}{Ric}\DeclareMathOperator{\ad}{ad}
\DeclareMathOperator{\scal}{scal}
\DeclareMathOperator{\Or}{O}\DeclareMathOperator{\Iso}{Iso}
\DeclareMathOperator{\SO}{SO}
\newcommand{\gl}{\mathfrak{gl}}\newcommand{\su}{\mathfrak{su}}
\newcommand{\so}{\mathfrak{so}}
\newcommand{\Lg}{\mathfrak{g}}
\newcommand{\Lu}{\mathfrak{u}}
\DeclareMathOperator{\Fix}{Fix}
\DeclareMathOperator{\trace}{tr}
\DeclareMathOperator{\Ad}{Ad}
\DeclareMathOperator{\Har}{Hk}\DeclareMathOperator{\Ham}{Hm}
\DeclareMathOperator{\Rm}{Rm}
\DeclareMathOperator{\pr}{pr}
\DeclareMathOperator{\id}{{id}}
\DeclareMathOperator{\rank}{rank}
\newcommand{\bml}{\bigl\langle} 
\newcommand{\bmr}{\bigr\rangle}     
\newcommand{\ml}{\langle}                     
\newcommand{\mr}{\rangle}                     
\newcommand{\bx}{\bar{x}}
\newcommand{\grad}{\mathrm{grad}}
\newcommand{\eps}{\varepsilon}
\newcommand{\bv}{\bar{v}}
\newcommand{\iso}{\mathfrak{iso}}
\newcommand{\gU}{\mathsf{U}}
\newcommand{\G}{\mathsf{G}}
\begin{document}

\title[Invariant nonnegativity conditions]{ A Lie algebraic approach to 
Ricci flow invariant curvature conditions and Harnack inequalities}

\author{Burkhard Wilking}

\maketitle

\begin{abstract} We consider a subset $S$ of the complex Lie algebra $\so(n,\C)$
and the cone $C(S)$ of curvature operators which are nonnegative on $S$. We show that 
$C(S)$ defines a Ricci flow invariant curvature condition if $S$ is invariant under 
$\Ad_{\SO(n,\C)}$. The analogue for K\"ahler curvature operators holds as well. 
Although the proof is very simple and short it recovers all previously known 
invariant nonnegativity conditions. 
As an application we reprove 
 that a compact K\"ahler manifold with positive orthogonal bisectional curvature 
evolves to a manifold with positive bisectional curvature and is thus biholomorphic to $\CP^n$. 
Moreover, the methods can also be applied to prove Harnack inequalities.
\end{abstract}

We consider a Lie algebra $\Lg$ endowed with a scalar 
product $\ml \cdot,\cdot \mr$ which is invariant under the adjoint 
representation of the Lie algebra. 
The reader should think of $\Lg$ either as the space $\so(n)$ 
of skew adjoint endomorphism of $\R^n$ with 
 the scalar product $\ml A,B\mr =-\tfrac{1}{2}\trace(AB)$ 
or of the Lie subalgebra $\Lu(n)\subset \so(2n)$ corresponding to the unitary group $\gU(n)\subset \SO(2n)$
endowed with the induced scalar product.

We consider the space of selfadjoint endomorphisms 
of $S^2(\Lg)$.
Every selfadjoint endomorphism 
$R\in S^2(\Lg)$  is determined by 
the corresponding bilinear form
$(x,y)\mapsto \ml Rx,y\mr$. 
The extension of this form 
to a complex bilinear form 
\[
R\colon \Lg\otimes_\R\C \times \Lg\otimes_\R\C\rightarrow \C
\]
will be denoted with the same letter $R$. 
Notice that for any $x\in \Lg\otimes_{\R}\C$ the number $R(x,\bx)$ is real, 
where $x\mapsto \bx$ is complex conjugation.

Also recall that the space of algebraic curvature operators $S^2_B(\so(n))$ 
is a linear subspace of $S^2(\so(n))$. Similarly the space of algebraic 
K\"ahler curvature operators $S^2_K(\Lu(n))$ is a linear subspace of $S^2(\Lu(n))$.
The subspaces are also invariant  under the Ricci flow ODE on $S^2(\Lg)$
\[
 R'=R^2+R^\#
\]
where $\ml R^\#x,y\mr=-\tfrac{1}{2}\trace(\ad_xR\ad_yR)$ for $x,y\in \Lg$.
We have the following basic result
\begin{mainthm}\label{mainthm: ricci} Let $S$  be a subset of the complex Lie algebra $\Lg\otimes_\R\C$ and let $\G_\C$ denote 
a  Lie group with Lie algebra $\Lg\otimes_\R\C$ . If $S$ is invariant under the adjoint representation 
of $\G_{\C}$, then for $h\in\R$  the set 
\[
C(S,h):= \{ R\in S^2(\Lg)\mid R(v,\bar v)\ge h\mbox{ for all $v\in S$} \} 
\]
is invariant under the ODE $R'=R^2+R^\#$.
\end{mainthm}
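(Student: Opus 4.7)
My plan is to apply Hamilton's ODE maximum principle on the closed convex set $C(S,h)$. Since $C(S,h)$ is the intersection of the affine half-spaces $\{R : R(v,\bar v) \ge h\}$ indexed by $v \in S$, its boundary consists of those $R$ for which the infimum of $v \mapsto R(v, \bar v)$ over $v \in S$ equals $h$ and is attained at some $v_0 \in S$. The invariance claim reduces to the tangent-cone condition
\[
(R^2 + R^\#)(v_0, \bar v_0) \ge 0
\]
for every such pair $(R, v_0)$.

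To use that $v_0$ is a minimizer, I would exploit the $\Ad_{\G_\C}$-invariance of $S$: for any $X \in \Lg \otimes_\R \C$ the orbit $v(t) := \Ad_{\exp(tX)} v_0$ stays in $S$, so the smooth function $f_X(t) := R(v(t), \overline{v(t)})$ has a local minimum at $t = 0$. Since $X$ may be replaced by $iX$, the first-order condition $f_X'(0) = 0$ collapses to
\[
R([X, v_0], \bar v_0) = 0 \quad \text{for all } X \in \Lg \otimes_\R \C,
\]
equivalent by the cyclic symmetry of $\ml [\cdot,\cdot], \cdot \mr$ to $[v_0, R \bar v_0] = 0$. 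The second-order condition $f_X''(0) \ge 0$ gives
\[
\Real R([X, [X, v_0]], \bar v_0) + R([X, v_0], [\bar X, \bar v_0]) \ge 0 \quad \text{for all } X \in \Lg \otimes_\R \C.
\]

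The main step---and the main obstacle---is the Lie-algebraic identification: to show that the two conditions above force $(R^2 + R^\#)(v_0, \bar v_0) \ge 0$. Working in an orthonormal real eigenbasis $\{\xi_i\}$ of $R$ with eigenvalues $\lambda_i$ and coefficients $c_i := \ml \xi_i, v_0 \mr$, the total antisymmetry of the structure constants (a consequence of $\Ad$-invariance of the inner product) yields the clean expressions
\[
R^2(v_0, \bar v_0) = \sum_i \lambda_i^2 |c_i|^2, \qquad R^\#(v_0, \bar v_0) = \tfrac{1}{2} \sum_{i,j} \lambda_i \lambda_j |\ml [\xi_i, \xi_j], v_0 \mr|^2.
\]
Substituting real $Y \in \Lg$ and then $iY$ into the second-order inequality and adding gives $R([Y, v_0], [Y, \bar v_0]) \ge 0$ for every $Y \in \Lg$; summing this nonnegativity over a basis of $\Lg$ produces the $R^\#$-sum up to a positive factor, while the $R^2$-part is accounted for by the $\Real R([Y, [Y, v_0]], \bar v_0)$-contributions, with the first-order identity $[v_0, R\bar v_0] = 0$ eliminating the residual Casimir-type cross-terms. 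The hard part is precisely this final bookkeeping, whose success crucially requires that the variations range over the full complex Lie algebra $\Lg \otimes_\R \C$ rather than only $\Lg$.
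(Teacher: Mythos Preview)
Your reduction to the tangent--cone inequality and your derivation of the second--variation inequality, including the $X\mapsto iX$ trick that isolates
\[
R([X,v_0],[\bar X,\bar v_0])\ge 0\qquad\text{for all }X\in\Lg\otimes_\R\C,
\]
are correct and coincide exactly with the paper's argument. The gap is in the final ``bookkeeping''.

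Summing $R([\xi_k,v_0],[\xi_k,\bar v_0])$ over a real orthonormal eigenbasis $\{\xi_k\}$ of $R$ gives
\[
\sum_k R([\xi_k,v_0],[\xi_k,\bar v_0])=\sum_{i,k}\lambda_i\,\bigl|\ml[\xi_i,\xi_k],v_0\mr\bigr|^2,
\]
which carries only \emph{one} eigenvalue factor, whereas your own formula $R^\#(v_0,\bar v_0)=\tfrac{1}{2}\sum_{i,j}\lambda_i\lambda_j\,|\ml[\xi_i,\xi_j],v_0\mr|^2$ carries two. Likewise $\sum_k[\xi_k,[\xi_k,v_0]]=Cv_0$ is the Casimir applied to $v_0$, independent of $R$, so $\sum_k\Real R([\xi_k,[\xi_k,v_0]],\bar v_0)=\Real R(Cv_0,\bar v_0)$ is \emph{linear} in $R$ and cannot produce the quadratic quantity $R^2(v_0,\bar v_0)=\sum_i\lambda_i^2|c_i|^2$. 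The discrepancy is a missing factor of $\lambda_k$ in every term, not a cross--term, so the first--order identity $[v_0,R\bar v_0]=0$ cannot repair it.

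The paper avoids this computation entirely. It notes that $R^2(v_0,\bar v_0)\ge 0$ is automatic, and for $R^\#$ it writes $2R^\#(v_0,\bar v_0)=\trace\bigl((-\ad_{v_0}R\ad_{\bar v_0})\cdot R\bigr)$. The second--variation inequality says precisely that the hermitian operator $-\ad_{v_0}R\ad_{\bar v_0}$ is nonnegative \emph{and} that $R$ induces a nonnegative sesquilinear form on the image of $\ad_{v_0}$, hence on the smaller image of $-\ad_{v_0}R\ad_{\bar v_0}$. The trace of a nonnegative hermitian operator against an operator that is nonnegative on its image is nonnegative, and that finishes the argument. The first--order condition is never used.

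A secondary issue: your opening sentence assumes the infimum $\inf_{v\in S}R(v,\bar v)$ is attained. When $S$ is unbounded it need not be, and the paper handles this separately by passing to the scaling--invariant set $\partial_\infty S$ and perturbing the ODE to $R'=R^2+R^\#+\eps I$.
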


In many cases $S$ is scaling invariant and then $h=0$ is the only meaningful 
choice. For $h=0$ the set  $C(S):=C(S,0)$ is a cone and the curvature condition $C(S)$ 
can be thought of as a nonnegativity condition. 
We recall that for a $O(n)$-invariant subset $C\subset S^2(\so(n))$ we say that a manifold satisfies $C$ 
if the curvature operator at each point is in $C\cap S^2_B(\so(n))$. 
Although the proof of the theorem is just a few lines long its statement recovers
via Hamilton's maximum principle [1986] the  invariance of all previously known 
invariant nonnegativity  conditions: \\
\begin{mainrem}
\label{mainrem} In all of the following examples we assume $h=0$. 
\begin{enumerate}
\item[a)] In the case of  $S=\so(n,\C)$  the theorem 
recovers the invariance of the cone $C(S)$ of nonnegative operators -- a result due to Hamilton.
\item[b)] In the case of  $\Lg=\so(n,\R)$ and $S=\{X\in\so(n,\C)\mid \trace(X^2)=0\}$  the theorem 
recovers the invariance of the cone $C(S)$ of $2$ nonnegative operators. 
 This result is also due to Hamilton.
\item[c)]  The invariance of nonnegative isotropic curvature, which was shown independently by Nguyen [2007,2010] and
 Brendle and Schoen [2009],
 can be seen
by setting $\Lg=\so(n)$ and
\[
 S:=\{X\in \so(n,\C)\mid \rank(X)=2, X^2=0 \}.
\]
The equation $X^2=0$ is equivalent to saying that each vector $v$ 
in the image of $X$ is isotropic, i.e., the imaginary part and the real part are perpendicular and have the same norm.
It is easy to see that $C(S)\cap S^2_B(\so(n))$ is indeed the space of curvature operators 
with nonnegative isotropic curvature.

\item[d)] The invariance of the condition that the manifold crossed with $\R$ has nonnegative isotropic curvature 
due to Brendle and Schoen corresponds to
\[
 S:=\{X\in \so(n,\C)\mid \rank(X)=2, X^3=0 \}.
\]
 
\item[e)] The invariance of the condition that the manifold crossed with $\R^2$ has nonnegative isotropic curvature
due to Brendle and Schoen
corresponds to
\[
S:=\{X\in \so(n,\C)\mid \rank(X)=2\}.
\]
It was then observed by Ni and Wolfson [2008] that $M$ satisfies $C(S)$ if and only if  
$M$ has nonnegative complex curvature. Ni and Wolfson also 
gave a simpler proof that positive complex curvature is invariant under the Ricci flow. 
For the author this simplification was one indication that 
proofs should be simpler in the complex setting.

This invariance was the key new result in the proof of the differentiable quarter pinched sphere theorem 
of Brendle and Schoen [2009].
The convergence of the metric under Ricci flow toward constant curvature then followed from [B\"ohm and Wilking, 2008],
see also subsection~\ref{subsec: pic1}.

\item[f)] The invariance of nonnegative bisectional curvature due to Mok [1988]
can be recovered from the theorem as well.
The Lie algebra $\Lu(n)\otimes_{\R}\C$ 
can be naturally identified with the algebra of complex $n\times n$ matrices $\gl(n,\C)$.

If we put 
\[
 S=\{X\in \gl(n,\C) \mid \rank(X)=1\},
\]
one can check by straightforward computation
that $C(S)\cap S^2_K(\Lu(n))$ is given by the cone of K\"ahler curvature operators 
with nonnegative bisectional curvature.
We would like to emphasize that Mok's proof of the invariance used a second variation argument 
for the first time in this context. The proof of the invariance of nonnegative isotropic curvature 
by Nguyen [2007,2010] and  Brendle and Schoen [2009] also relied on second variation. The same is true for the proof 
of Theorem~\ref{mainthm: ricci}. 
\item[g)] The theorem also shows the invariance of orthogonal bisectional curvature, if we put 
\[
S:=\{X\in \gl(n,\C) \mid \rank(X)=1, X^2=0\}.
\]
The invariance was announced by Hamilton and H.D.~Cao in the early 90s and a proof was given by Gu and Zhang [2010].
\end{enumerate}

\end{mainrem}

The theorem can also be generalized to obtain Harnack inequalities:
Let $(M,g(t))$ be a solution to the Ricci flow. 
We endow the Lie algebra $\Lg(p,t)$ of the isometry group of 
$\Iso(T_pM,g(t))$ with a scalar product $((A,v),(B,w))=-\tfrac{1}{2}\trace(AB)+ g(t)(v,w)$
for skew adjoint endomorphisms $A,B$ of $(T_pM,g(t))$ and $v,w\in T_pM$. 
The Harnack operator $\Ham$ can be viewed as a self adjoint endomorphism of $\Lg(p,t)$. 
As a consequence of Hamilton's work [1993] the Harnack operator satisfies (cf. section~\ref{sec: har}) with respect to moving frames an evolution equation of the form
\[
 \Ham'=\Delta \Ham+ 2(\Ham \pr \Ham+ \Ham^\#)+\tfrac{2}{t}\Ham
\]
Here $\pr\colon \Lg(p,t)\rightarrow \so(T_pM)$ denotes the orthogonal 
projection and 
\[\ml \Ham^\#(x),y\mr =-\tfrac{1}{2}\trace(\ad_{\cdot}^{tr}x\Ham \ad_{\cdot}^{tr}y\Ham),\] 
where $\ml \ad_z^{tr}x,w\mr =\ml x,[z,w]\mr$ and $\ad_\cdot^{tr}x$ is the map 
$z\mapsto \ad_z^{tr}x$, which is easily seen to be skew adjoint with respect to $\ml\cdot,\cdot \mr$.

\begin{mainthm}\label{mainthm: har} Let $\Lg$ be the Lie algebra of $Iso(\R^n)$ endowed with the scalar product from above. 
Let $S$ be a subset of $\Lg_{\C}=\Lg\otimes_{\R}\C$. We consider $\Lg$ endowed 
with coadjoint representation $g\mapsto \Ad_{g^{-1}}^{tr}$, $\ml \Ad_a^{tr}v,w\mr= \ml v,  \Ad_aw\mr$. 
We suppose that $S$ is invariant under the natural extension of the coadjoint representation 
to a represention of $\SO(n,\C)\rtimes \C^n$. 
Then the cone 
\[
C(S)=\bigl\{\Ham\in S^2(\Lg)\mid \Ham(x,\bar x)\ge 0 \mbox{ for all $x\in S$}\bigr\}
\]
defines a Ricci flow invariant condition.
\end{mainthm}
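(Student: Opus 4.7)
My plan is to follow the template of Theorem~\ref{mainthm: ricci}, accounting for three features of the Harnack setting: the scalar product on $\Lg=\so(n)\oplus\R^n$ is not $\ad$-invariant (so the infinitesimal orbit directions are $-\ad^{tr}$ and the natural invariance hypothesis is coadjoint), the reaction term $R^2$ is replaced by $\Ham\pr\Ham$ (corresponding to only the $\so(n)$-part of $\Lg$ being semisimple), and the extra term $\tfrac{2}{t}\Ham$ will vanish at a boundary point. By Hamilton's maximum principle it suffices to show that the ODE $\Ham'=2(\Ham\pr\Ham+\Ham^\#)+\tfrac{2}{t}\Ham$ preserves $C(S)$.

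Suppose $\Ham\in\partial C(S)$ and pick $x\in S$ with $\Ham(x,\bar x)=0$ a minimizer of $y\mapsto\Ham(y,\bar y)$ on $S$. The goal is $\tfrac{d}{dt}\Ham(x,\bar x)\ge 0$. The term $\tfrac{2}{t}\Ham(x,\bar x)$ vanishes, and since $\pr$ is a symmetric idempotent, $\Ham$ is real self-adjoint, and $\pr\Ham\bar x=\overline{\pr\Ham x}$, one has
\[
\langle\Ham\pr\Ham x,\bar x\rangle=\langle\pr\Ham x,\overline{\pr\Ham x}\rangle=|\pr\Ham x|^2\ge 0.
\]
Thus the problem reduces to the algebraic inequality $\Ham^\#(x,\bar x)+|\pr\Ham x|^2\ge 0$.

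To extract variational data, invoke the coadjoint invariance of $S$: for each $A\in\Lg_\C$ the curve $\gamma_A(t)=\exp(-t\ad^{tr}_A)x$ lies in $S$, so $f_A(t):=\Ham(\gamma_A(t),\overline{\gamma_A(t)})$ is nonnegative with $f_A(0)=0$. Computing $f_A'(0)=0$ and $f_A''(0)\ge 0$, and replacing $A$ by $iA$ to separate real and imaginary parts as in Theorem~\ref{mainthm: ricci}, produces
\[
\Ham(\ad^{tr}_A x,\bar x)=0\quad\text{and}\quad\Ham(\ad^{tr}_A x,\overline{\ad^{tr}_A x})\ge 0\qquad\text{for all }A\in\Lg_\C.
\]

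The hard part will be converting these identities into the desired inequality. In a real orthonormal basis $\{e_\alpha\}$ of $\Lg$ diagonalizing $\Ham$ with eigenvalues $\lambda_\alpha$, a direct computation using $\langle\ad^{tr}_z x,w\rangle=\langle x,[z,w]\rangle$ yields
\[
\Ham^\#(x,\bar x)=\tfrac{1}{2}\sum_{\alpha,\beta}\lambda_\alpha\lambda_\beta|c_{\alpha\beta}|^2,\qquad c_{\alpha\beta}:=\langle x,[e_\alpha,e_\beta]\rangle,
\]
while the second-variation inequality evaluated at $A=e_j$ gives $\sum_\beta\lambda_\beta|c_{j\beta}|^2\ge 0$ for each $j$. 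The algebraic rearrangement that combines these weighted sums with $|\pr\Ham x|^2$ into a manifestly nonnegative combination of Hermitian squares $\Ham(\ad^{tr}_A x,\overline{\ad^{tr}_A x})$ is the crux; here the $\pr$-term should compensate precisely for the failure of $\ad$-invariance of the scalar product in the $\R^n$-direction, so that with the semidirect bracket $[(A,v),(B,w)]=([A,B],Aw-Bv)$ the argument reduces to the algebraic identity underlying the proof of Theorem~\ref{mainthm: ricci}, now specialized to this non-semisimple Lie algebra.
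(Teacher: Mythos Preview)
Your variational setup is exactly right and you have extracted precisely the inequality you need, but the last paragraph leaves the argument unfinished and is built on a misconception. You do \emph{not} need the term $|\pr\Ham x|^2$ to compensate for anything: the paper proves $\Ham^\#(x,\bar x)\ge 0$ on its own, and $\Ham\pr\Ham(x,\bar x)=|\pr\Ham x|^2\ge 0$ is a separate, trivially nonnegative contribution. The step you are missing is the same ``slight extension of the elementary lemma'' used in the proof of Theorem~\ref{mainthm: ricci}. Your second-variation inequality $\Ham(\ad_A^{tr}x,\overline{\ad_A^{tr}x})\ge 0$ for all $A\in\Lg_\C$ says two things at once: first, since $A\mapsto\ad_A^{tr}x$ is skew-adjoint, the Hermitian operator $P:=-\ad_\cdot^{tr}x\,\Ham\,\ad_\cdot^{tr}\bar x$ is nonnegative; second, the Hermitian form induced by $\Ham$ is nonnegative on the image of $A\mapsto\ad_A^{tr}x$, and hence on the image of $P$. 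Since $2\Ham^\#(x,\bar x)=\trace(P\Ham)$ and the trace of the product of a nonnegative Hermitian operator with an operator that is nonnegative on its image is $\ge 0$ (diagonalize $P$), you are done. Your eigenbasis formula $\Ham^\#(x,\bar x)=\tfrac{1}{2}\sum\lambda_\alpha\lambda_\beta|c_{\alpha\beta}|^2$ is correct but a dead end: you cannot simply multiply the $j$-th inequality $\sum_\beta\lambda_\beta|c_{j\beta}|^2\ge 0$ by $\lambda_j$ and sum, because $\lambda_j$ may be negative, and no amount of $|\pr\Ham x|^2$ will repair that.

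A secondary point: the sentence ``by Hamilton's maximum principle it suffices\dots'' hides real content. The parallel transport relevant to Harnack operators has holonomy in $\Iso(\R^n)$ rather than $\SO(n)$, so one needs a maximum principle tailored to closed convex $\Iso(\R^n)$-invariant sets; this is precisely what Section~\ref{sec: har} provides. Once that reduction to the ODE is granted, the inequalities $\Ham\pr\Ham(x,\bar x)\ge 0$ and $\Ham^\#(x,\bar x)\ge 0$ (each separately) finish the proof.
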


It is not hard to see that the ODE $\Ham'=\Ham \pr \Ham+ \Ham^\#$ is equivariant 
with respect to the action of $\Iso(\R^n)$ on $S^2(\Lg)$ given by
$g\star \Ham:=\Ad_g\Ham \Ad_{g}^{tr}$.

The theorem recovers Brendle's  recent generalization of Hamilton's Harnack inequality 
by putting
\[
 S=\{(A,v)\mid A\in \so(n,\C),\rank(A)=2, v\in A(\C^n)\}
\]
As is shown in [Brendle, 2009] this still implies the usual trace Harnack inequality. \\[1ex]

A K\"ahler manifold $M$ is said to have positive orthogonal bisectional curvature if $K(v,w)+K(v,iw)> 0$
holds for all unit vectors $v,w\in T_pM$ with $\C\cdot v\perp \C\cdot w$, where $ K(v,w)$ denotes the
 sectional curvature of the plane spanned by $v$ and $w$. A K\"ahler surface has nonnegative orthogonal
 bisectional curvature if and only if 
it has nonnegative isotropic curvature. 
Thus orthogonal bisectional curvature is independent of the traceless 
Ricci part if $n=2$.
Furthermore, $M$ has nonnegative bisectional curvature if and only if 
$M\times \C$ has nonnegative orthogonal bisectional curvature.
We will give a somewhat simpler proof of the following theorem.

\begin{mainthm}\label{mainthm: bisec} 
A compact K\"ahler manifold of complex dimension $n>1$ with positive orthogonal bisectional curvature evolves under the Ricci flow
 to a manifold with positive bisectional curvature.
\end{mainthm}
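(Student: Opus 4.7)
The plan is to combine Theorem~\ref{mainthm: ricci}, applied to orthogonal bisectional curvature as in Remark~\ref{mainrem}(g), with a B\"ohm--Wilking style pinching argument.

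First, from Remark~\ref{mainrem}(g) with $S = \{X \in \gl(n,\C) : \rank X = 1,\ X^2 = 0\}$, the cone $C(S) \cap S^2_K(\Lu(n))$ of K\"ahler algebraic curvature operators with nonnegative orthogonal bisectional curvature is preserved by the ODE $R' = R^2 + R^\#$. Since the K\"ahler property is itself Ricci-flow invariant, Hamilton's maximum principle ensures that strict positivity of orthogonal bisectional curvature persists along the flow on any existence interval $[0, T)$.

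Second, to upgrade to positive bisectional curvature, I would construct a one-parameter family of Ricci-flow-invariant closed convex cones $\{K_\eta\}_{\eta \in [0, \eta_0]}$ in $S^2_K(\Lu(n))$ with $K_0 = C(S)$ and with each $K_\eta$ ($\eta > 0$) contained in the cone of nonnegative bisectional curvature supplied by Remark~\ref{mainrem}(f). A standard parabolic pinching estimate then forces the evolving curvature operator to enter $K_\eta$ for some $\eta > 0$ at any positive time, giving positive bisectional curvature on $(0, T)$. Since the result is purely local to each point (the ODE argument handles the pinching, the PDE heat-term only helps), a compactness argument on $M$ provides the uniform $\eta$.

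The main obstacle is the construction of the family $\{K_\eta\}$. Under $\Ad_{\GL(n,\C)}$-conjugation the nonzero rank-one matrices in $\gl(n,\C)$ consist of the orbit $\{X^2 = 0\}$ together with the orbits $\{\trace X = c\}$ for $c \neq 0$; positive real rescaling identifies the latter orbits at the level of the condition $R(X, \bar X) \geq 0$, so Theorem~\ref{mainthm: ricci} alone jumps from orthogonal bisec straight to full bisec without continuous interpolation. I would therefore produce the intermediate cones by composing $C(S)$ with a linear transposition $\ell_\eta$ of algebraic K\"ahler curvature operators that shifts by multiples of the Ricci and scalar-curvature components (the K\"ahlerian analogue of the B\"ohm--Wilking transformation used in the positive isotropic curvature setting), and verify invariance of $\ell_\eta(C(S))$ directly using the explicit form of $R^2 + R^\#$ on Kähler operators. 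As a backup, a strong-maximum-principle argument at a first contact time with the boundary of the bisectional cone should reduce the problem to showing that null bisec directions force a parallel isometric $\C$-factor of $M$, which compactness together with positivity of orthogonal bisec rule out.
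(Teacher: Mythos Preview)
Your outline has the right flavor---the paper does use a K\"ahler analogue $l_s$ of the B\"ohm--Wilking transformation---but there is a genuine gap at the step ``verify invariance of $\ell_\eta(C(S))$ directly.''  The transformed set $l_s(C)$ is \emph{not} unconditionally invariant under the ODE.  In the paper's Lemma~\ref{lem: kaehler} the leading correction to the ODE under $l_s$ is $2\Ric(R)\star\Ric(R)$, which is only nonnegative (hence helpful) when the Ricci curvature is nonnegative; accordingly Corollary~\ref{cor: pinching} shows invariance of $l_s(C(S,h))$ only under the constraint $C_2(p)=\{\Ric\ge p\,\tfrac{\scal}{2n}\}$.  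Nonnegative orthogonal bisectional curvature does not by itself give a Ricci pinching of this kind (already for $n=2$ it is just nonnegative isotropic curvature), so your proposed cones $\ell_\eta(C(S))$ will not be ODE-invariant without an additional hypothesis, and your claim that the argument is ``purely local to each point'' is incorrect.

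The substantial work you are missing is precisely establishing that $C_2(p)$ eventually holds along the flow.  The paper does this in three steps that are essentially global: (i) a Bochner argument (Lemma~\ref{lem: bochner}) shows $H^2(M,\R)\cong\R$, hence the first Chern class is positive; (ii) Perelman's result, written up by Sesum--Tian, then guarantees that any blow-up limit near the singular time is \emph{compact}; (iii) the pinching mechanism of Lemma~\ref{lem: pinching}, applied with the non-scaling-invariant sets $C(S,h)$ for $S=\{\rank X=\trace X=1\}$ and $h<0$, forces the limit to have nonnegative bisectional curvature, and the strong maximum principle upgrades this to positive Ricci.  Only after this does the $l_s$-transformation apply.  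Note also that the paper never interpolates between cones; it uses the shifted sets $C(S,h)$ with $h<0$ and the observation that $\partial_\infty S$ recovers the nilpotent rank-one matrices, which is how one passes from orthogonal bisec to bisec without the discrete jump you identified.
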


The theorem is not new.
Chen [2007] shows that for a compact solution to the 
K\"ahler Ricci flow which has positive first Chern class and positive orthogonal 
bisectional curvature throughout space time, the 
bisectional curvature becomes positive.
Then Gu and Zhang [2010] show that indeed the first Chern class is positive and they 
also give a proof of the invariance of positive orthogonal bisectional curvature.

We decided to give a proof which is independent of [Chen, 2007] and Gu and Zhang [2010]. 
However, in our proof as well as in [Chen, 2007] 
  a key ingredient is a result of 
Perelman, written up by Sesum and Tian [2006],
 ensuring that for a compact K\"ahler manifold with positive first Chern class all non flat blow up limits  
are compact. 

Although we do not need it we should mention that
Chen, Sun and Tian [2009] gave a new  proof of the statement 
that a K\"ahler manifold with positive bisectional curvature 
evolves under the normalized K\"ahler Ricci flow to the Fubini study metric on $\CP^n$. 
The new proof does not need directly the solution
of the Frankel conjecture due to Mori [1979] and Siu and Yau [1980].

We will explain in an appendix why Brendle and Schoen's strong maximum principle [2008] 
carries over to our more general setting.  Therefore a K\"ahler metric on a compact 
manifold with nonnegative orthogonal bisectional curvature evolves under the Ricci flow to one  
with positive  orthogonal bisectional curvature unless the holonomy group 
is not equal to $\gU(T_pM)$.  Combining with Berger's classification of holonomy groups [1955]
and the solution of the Frankel conjecture, one can show that a locally irreducible compact K\"ahler manifold of dimension $n>1$ with nonnegative orthogonal bisectional curvature is either 
biholomorphic to $\CP^n$ or locally isometric to a hermitian symmetric space. 
This recovers a rigidity theorem of Gu and Zhang [2010] which in turn 
generalized a result of Mok [1988].

The paper is organized as follows. 
Section~\ref{sec: ricci} contains the proof of Theorem~\ref{mainthm: ricci}. 
Although the essential part of the argument in the proof of 
Theorem~\ref{mainthm: har} is completely analogous, we do need a little extra preparation, which is done in
 section~\ref{sec: har}. Here we provide a maximum principle for Harnack operators. 
One of the main points is to explain that the invariance group 
needed for the maximum principle is naturally isomorphic to $\Iso(T_pM)$.
This in fact is a simple consequence of the work of Chow and Chu [1995]
as well as Brendle [2009].
Although we do not need it we have 
added a subsection showing that in quite 
a few cases the maximum principle 
for Harnack operators can not possibly yield any meaningful outcome.
Using section~\ref{sec: har} the proof of Theorem~\ref{mainthm: har} is reduced to an ODE
problem, which is solved, completely analogously to section~\ref{sec: ricci} in section~\ref{sec: mainthm har}.
In section~\ref{sec: pinching} we show that often Theorem~\ref{mainthm: ricci} can be used 
to show that a nonnegativity condition pinches towards a stronger nonnegativity condition. 
We show for example that nonnegative orthogonal bisectional curvature pinches toward nonnegative 
bisectional curvature.
Section~\ref{sec: bisec} is devoted to the proof of Theorem~\ref{mainthm: bisec}.
An appendix is devoted to  the strong maximum principle.

This paper was written up while the author was visiting the University of California at Berkeley 
as a Visiting Miller Professor. I am grateful to the Miller institute for support and hospitality.
I would also like to thank Esther Cabezas-Rivas and Sebastian Hoelzel 
for pointing out several typos and inconsistencies in an earlier version of this
paper.

\section{Proof of Theorem~\ref{mainthm: ricci}.}\label{sec: ricci}
Let $S\subset \Lg\otimes_\R \C$ be invariant under $\Ad_{\G_\C}$ and put as in the theorem
\[
C(S,h):= \{ R\in S^2(\Lg)\mid R(v,\bar v)\ge h\mbox{ for all $v\in S$} \}. 
\]
Since $C(S,h)$ does not change if we replace $S$ by its closure we may assume that $S$ is closed.
As we will see below it suffices to show\\[1ex]
{\bf Claim.} If $R\in C(S,h)$ and   $v\in S$ 
with $R(v,\bar v)=h$, then $R^2(v,\bar v)+R^\#(v,\bar v)\ge 0$.\\[1ex] 
Clearly, $R^2(v,\bar v)\ge 0$. 
We plan to establish the inequality by showing that the second summand is nonnegative as well: 
\[
2R^\#(v,\bar v)=-\trace (\ad_v R\ad_{\bar v} R)\ge 0.
\]
Using that $S$ is invariant under $\Ad_{\G_{\C}}$ we deduce that
\[h\le R(\Ad_{\exp(t x)}v ,\Ad_{\exp(t \bar x)}\bar v)\]
 for all $x\in \Lg\otimes_{\R}\C$ and for all $t$ and with equality at $t=0$. 
Recall that $\Ad_{\exp(t x)}=\exp(t\ad_x)$. Thus differentiating twice 
with respect to  $t$ and evaluating at $0$ gives
\[
 0\le 2 R(\ad_x v ,\ad_{\bar x}\bar v) +R(\ad_x\ad_x v , \bar v)+R(v ,\ad_{\bar x}\ad_{\bar x}\bar v )
\]
If we replace $x$ by $i x$, it is easy to see that the first summand in the above inequality remains unchanged 
while the other two summands change their sign. 
Therefore 
\begin{eqnarray}\label{eq: nonneg}
 0&\le& R(\ad_x v ,\ad_{\bar x}\bar v)= R(\ad_v x ,\ad_{\bar v}\bar x) \mbox{ for all $x\in \Lg\otimes_{\R}\C$. }
\end{eqnarray}
In other words the hermitian operator 
$ -\ad_{\bar v} R\ad_v$ and its conjugate $-\ad_{v} R\ad_{\bar v}$ on the unitary vectorspace $\Lg\otimes_{\R}\C$ are nonnegative.
Recall that we plan to show 
$\trace (-\ad_v R\ad_{\bar v} R)\ge 0$. It is an elementray well known lemma  that 
the scalar product of two nonnegative hermitian matrices is nonnegative. 
By a slight extension of this lemma it suffices to show that the 
operator $R$ induces a nonnegative sesquilinear form on the image of the first operator
$ -\ad_v R\ad_{\bar v}$. 
Clearly the image  is contained in the image of 
$\ad_v$ and by \eqref{eq: nonneg} $R$ is indeed nonnegative on it which completes the proof of the claim.\\[2ex]
If $h=0$, then we may assume that 
$S$ is scaling invariant and the invariance of $C(S)$ follows immediately from the claim.
In general we have to be a bit more cautious since we do not know that 
the infimum of $\{R(\bar v, v)\mid v\in S\}$ is attained.

In order to see that the above claim is sufficient in the general case
we consider a solution $R(t)$ to the ODE $R'=X(R)=R^2+R^\#+\eps I$ for some $\eps>0$.
We plan to show that if $R(0)\in C(S,h)$ then $R(t)\in C(S,h-\eps t)$ for $t>0$. 
By taking the limit $\eps\to 0$ we get the desired result. 

Suppose, on the contrary, that $R(t_i)\notin C(S,h-\eps t_i)$ for some positive $t_i\to 0$.
Thus there are $v_i\in S$ with $R(t_i)(v_i,\bar v_i)<h-\eps t_i$. 
If $v_i$ stays bounded we can assume that $v_i\to v\in S$  
with $R(0)(v,\bar v)=h$. From the above claim $R'(0)(v,\bar v)\ge 0$.
Thus there is a neighborhood $U$ of $v$ and $\delta >0$ with $R'(t)(u,\bar u)\ge -\eps/2$ 
for $u\in U$ and all $t\in [0,\delta]$. Clearly this gives a contradiction. 

Thus we may assume $\|v_i\|\to \infty$.
 After passing to a subsequence, 
$\tfrac{v_i}{\|v_i\|}\to w$ with
$R(w,\bar w)\le 0$ and  
\[
w\in \partial_\infty S:=\{ Y\in \Lg_{\C}\mid\mbox{ there exists $\lambda_i\in \R$ and $v_i\in S$ with $\lambda_i\to 0$ and 
$\lambda_iv_i \to Y$}\}
\]
We call $\partial_\infty S$ the boundary of $S$ at infinity.
Clearly $\partial_\infty S$  is scaling invariant and invariant under $\Ad_{\G_{\C}}$ 
using $R\in C(S,h)$ it is elementary to check 
that $R\in C(\partial_\infty S,0)$, cf. Lemma~\ref{lem: pinching} below. 
In particular $R(w,\bar w)=0$ and from the above claim $(R^2+R^\#)(w,\bar w)\ge 0$. 
Therefore $R'(0)(w,\bar w)\ge \eps$. 
This in turn shows that there is a neighborhood $U$ of $w$ and $\delta>0$ such that 
$R'(t)(u,\bar u)>\eps/2$ for all $u\in U$ and $t\in [0,\delta]$. 
For large $i$ we have $v_i= \|v_i\| u_i$ for some $u_i\in U$ and therefore 
$R(t_i)(v_i,\bar v_i)\ge R(0)(v_i,\bar v_i)\ge h$ for all large $i$ -- a contradiction.

\begin{rem} In the case of $\Lg=\Lu(n)$, $\Lg_{\C}=\gl(n,\C)$ one can generalize the theorem slightly. 
For $h_1,h_2\in\R$ the set \[
C(S,h_1,h_2)=\{ R\in S^2(\Lu(n))\mid R(v,\bar v)+ h_2 \trace(v)\trace(\bv)\ge h_1, \mbox{ for all $v\in S$}\}\]
is invariant under the ODE as well, provided that $S$ is $\Ad_{\GL(n,\C)}$-invariant.
\end{rem}

\section{Maximum principle for Harnack operators.}\label{sec: har}

In this section we 
establish a maximum principle for Harnack operators 
which only needs the invariance under a group action of $\Iso(\R^n)$.
This is in fact a simple consequence of the work of Chow and Chu [1995],  see also 
[Chow and Knopf, 2002].

Let $(M,g(t))$ be a solution 
to the Ricci flow $t\in (0,T)$.
We consider $N=M\times [0,T)$.
We define a Riemannian metric $\ml \cdot,\cdot \mr$
on $N$ by 
\[
 \ml v,w\mr=g(t)(v,w),\,\,\, \bigl \ml v,\tfrac{\partial}{\partial t} \bigr\mr=0,\,\,\,\bigl \ml \tfrac{\partial}{\partial t},\tfrac{\partial}{\partial t} \bigr\mr=1
\]

We identify the Lie algebra $\iso(T_pM)$ of the isometry group of $T_pM$ 
with $ \Lambda^2 T_pM\oplus \{ \tfrac{\partial}{\partial t}\wedge v\mid v\in T_pM\}$
and define the Harnack operator $\Har$ as an selfadjoint endomorphism
of $\iso(T_pM)$ by 
\begin{eqnarray*}
\ml \Har(X\wedge Y), W\wedge Z\mr &=&R_{g(t)}(X\wedge Y,W\wedge Z)\\
\ml \Har(X\wedge Y), \tfrac{\partial}{\partial t}\wedge Z\mr &=&t(\nabla^{g(t)}_X\Ric)(Y,Z)-t(\nabla^{g(t)}_Y\Ric)(X,Z)\\
\ml \Har( \tfrac{\partial}{\partial t}\wedge X), \tfrac{\partial}{\partial t}\wedge Y\mr &=&
t^2(\Delta \Ric)_{g(t)}(X,Y)-
\tfrac{t^2}{2}\Hess_{g(t)}(\scal)(X,Y)\\
&&
+2t^2\sum_i\Ric_{g(t)}(e_i,e_i)\Rm^{g(t)}(e_i\wedge X, e_i \wedge Y)\\&&-t^2\Ric_{g(t)}(\Ric^{g(t)}X,Y)+\tfrac{t}{2}\Ric(X,Y)
\end{eqnarray*}
where $\Ric^{g(t)}$ resp. $\Ric_{g(t)}$ is the Ricci tensor of $(M,g(t))$ viewed as $(1,1)$ resp. $(2,0)$ tensor, 
 $e_i$ is an orthonormal basis of eigenvectors of $\Ric^{g(t)}$
and where  $\Hess_{g(t)}(\scal)$ is the Hessian of the scalar curvature of $(M,g(t))$.
By putting 
\[
 \Ham(X\wedge Y+ \tfrac{\partial}{\partial t}\wedge Z,X\wedge Y+ \tfrac{\partial}{\partial t}\wedge Z)=
\Har(X\wedge Y+ \tfrac{1}{t}\tfrac{\partial}{\partial t}\wedge Z,X\wedge Y+ \tfrac{1}{t}\tfrac{\partial}{\partial t}\wedge Z)
\]
we get back to the usual definition of the Harnack operator. 

 Let $\Lg$ be the Lie algebra of $\Iso(\R^n)$ 
endowed with the natural scalar product from the introduction.  
Consider on $\Lg$ the coadjoint representation 
\[
 \Iso(M,g)\rightarrow \GL(\Lg),\,\, g\mapsto \Ad_{g^{-1}}^{{tr}}
\]
Let $S^2(\Lg)$ denote the vectorspace of selfadjoint endomorphisms 
of $\Lg$ endowed the  representation of $ \Iso(\R^n)$ given by 
$g\star R= \Ad_gR\Ad_{g}^{tr}$ for $R\in S^2(\Lg)$ and $g\in \Iso(\R^n)$. 
Although it is not important for us, we mention that by Brendle [2009],  
the Harnack operator is always contained in a linear subspace $S^2_B(\Lg)$ 
of operators satisfying the first 
Bianchi identity. 

Recall that a family of sets $C(t)\subset V$ ($t\in (a,b)$) in a vectorspace $V$  is called 
invariant under a ODE $v' =f(v)$ if for any solution $v(t)$ ($t\in [t_0,s]$) 
with $v(t_0)\in C(t_0)$ we have $v(t)\in C(t)$ for $t\ge t_0$.
In this section we want to prove
\begin{thm}\label{thm: max} Suppose $C(t)\subset S^2_B(\Lg)$ is a family of  closed convex sets
 which is invariant 
under the above representation of $\Iso(\R^n)$. 
 We assume that $C$ is invariant under the ODE
\[
 \Har'=2(\Har \pr \Har+\Har^\#)
\]
where $\pr\colon \iso(\R^n)\rightarrow \so(\R^n)$ is the orthogonal projection, 
$\Har\pr \Har$ is the composition of the three endomorphisms
and 
\[
 \ml \Har^\#A,B \mr=-\tfrac{1}{2}\trace\bigl(\ad^{tr}_{\cdot}A\Har\ad^{tr}_{\cdot}B\Har\bigr).
\]
Then $C(t)$ defines a Ricci flow invariant condition, that  is, 
if $(M,g(t))$ ($t\in(0,T)$) is a compact  solution to the Ricci flow 
and $\Har(p,0)\in C(0)$ for all $p\in M$ 
then $\Har(p,t)\in C(t)$ for all  $t$.
\end{thm}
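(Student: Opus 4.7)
The plan is to deduce Theorem~\ref{thm: max} from Hamilton's [1986] vector bundle maximum principle, with the vector bundle and connection coming from the space-time formalism of Chow and Chu. Concretely, the Harnack operator will be viewed as a section of a natural vector bundle over $M$ whose fibres are $S^2_B(\Lg(p,t))$, equipped with a connection whose holonomy lies in the full isometry group $\Iso(T_pM)$ rather than merely in $\Or(T_pM)$. Granted this, the three hypotheses of Hamilton's principle --- closed/convex fibrewise, invariant under the reaction ODE, and invariant under parallel transport --- coincide precisely with the assumptions of the theorem.

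\textbf{Set-up and evolution.} I would first combine the definitions of $\Har$ and $\Ham$ given at the start of Section~\ref{sec: har} with the introduction's evolution formula $\Ham' = \Delta \Ham + 2(\Ham\pr\Ham + \Ham^{\#}) + \tfrac{2}{t}\Ham$ to verify that, in moving frames on the space-time bundle, $\Har$ itself satisfies a clean heat equation $\Har' = \Delta \Har + 2(\Har\pr\Har + \Har^{\#})$; the $\tfrac{2}{t}$-term is absorbed by the factor of $t$ introduced when passing from $\Ham$ to $\Har$. Next, I would assemble the $\Lg(p,t) = \iso(T_pM, g(t))$'s into a smooth vector bundle $\mathcal{E} \to M \times (0,T)$ and equip it with the connection induced by the Chow--Chu space-time connection on $N = M \times [0,T)$. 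The key point here is the Chow--Chu / Brendle observation that time-direction parallel transport acts on each fibre $\iso(T_pM) = \so(T_pM) \oplus T_pM$ by \emph{translations as well as rotations}, so that a change of moving frame transforms $\Har$ exactly via the coadjoint action $R \mapsto \Ad_g R \Ad_g^{tr}$ of $\Iso(\R^n)$.

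\textbf{Conclusion and main obstacle.} With these ingredients Hamilton's vector bundle maximum principle applies essentially verbatim. In a parallel frame near a boundary point of $C(t)$, the section $\Har$ satisfies a heat equation whose reaction ODE preserves $C(t)$ by hypothesis and whose diffusion preserves any closed convex fibrewise parallel-invariant set; the fibrewise invariance of $C(t)$ under all frame changes, which is needed so that $C(t)$ is well defined and parallel-invariant on $\mathcal{E}$, is supplied precisely by the assumed $\Iso(\R^n)$-invariance. Compactness of $M$ removes any issue with the behaviour at infinity, so if $\Har(\cdot, 0)$ is a section of $C(0)$ it remains a section of $C(t)$ for all $t \in (0,T)$. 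The main obstacle is the verification in the previous paragraph that the Chow--Chu connection has $\Iso(\R^n)$-holonomy and that its induced Laplacian matches the diffusion term in the $\Har$-evolution; this is the substance of the preparatory work in Section~\ref{sec: har}, and once it is in place the proof of Theorem~\ref{thm: max} reduces to a routine application of the vector-bundle maximum principle.
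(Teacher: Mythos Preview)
Your overall strategy---setting up the space--time connection of Chow--Chu/Brendle, identifying its holonomy with $\Iso(\R^n)$, and recognising that the $\Iso(\R^n)$-invariance of $C(t)$ is precisely what makes the fibrewise condition well-defined under parallel transport---matches the paper exactly. The evolution equation $\nabla_{\partial/\partial t}\Har = \Delta\Har + 2(\Har\pr\Har + \Har^{\#})$ is indeed Theorem~\ref{har evolution}, and the preparatory work you describe is the content of Section~\ref{sec: har} up to that point.

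The gap is in your final paragraph, where you assert that Hamilton's vector-bundle maximum principle applies ``essentially verbatim'' and that the remaining argument is ``routine''. The standard proof of that principle needs, at the moment of first contact with $\partial C(t)$, a perturbation device: one replaces $C(t)$ by a slightly larger closed convex set, still invariant under parallel transport, on whose boundary the reaction term points strictly inward. When the holonomy group is compact (say $\Or(n)$), the metric $\eps$-neighbourhood of $C(t)$ does this job. Here, however, the holonomy is the \emph{non-compact} group $\Iso(\R^n)$ acting on $S^2_B(\Lg)$ by $R\mapsto \Ad_g R\Ad_g^{tr}$, and this action is not isometric; the paper points out explicitly in Remark~\ref{rem: k}(c) that in general there is \emph{no} $\Iso(\R^n)$-invariant convex set containing $C(t)$ in its interior. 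So the off-the-shelf maximum principle genuinely does not apply, and this is not merely a matter of checking the setup.

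The paper circumvents this with a different perturbation: it fixes a smooth auxiliary section $T(p,t)$ taking values in the \emph{interior} of $C(t)$, approximates $\Har$ by solutions $S_k$ of the modified equation
\[
\nabla_{\partial/\partial t}S_k=\Delta S_k+2(S_k\pr S_k+S_k^{\#})+\eps_k(T-S_k)
\]
with initial data strictly interior to $C(1/k)$, and then argues by contradiction at a first boundary contact of $S_k$. The extra drift $\eps_k(T-S_k)$ lies in the interior of the tangent cone $T_{S_k}C(t)$, which supplies the strictness needed for the contradiction without ever having to enlarge $C(t)$ in an invariant way. This step is what your proposal is missing.
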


Of course maximum principles are well established in the literature including 
some for Harnack operators and generalization to open manifolds 
and the case of $t\to 0$ have been established. 
More important for us is that we only need $C$ to be 
invariant under the above representation 
of the relatively small group $\Iso(\R^n)$.

We consider the connection
\begin{eqnarray}\label{nabla}
 \nabla_XY=\nabla^{g(t)}_XY, \,\, 
\nabla_{\frac{\partial}{\partial t}}\tfrac{\partial}{\partial t}=-\tfrac{t}{2}\grad^{g(t)}(\scal)
\end{eqnarray}
\[
\nabla_{\frac{\partial}{\partial t}}Y=-\Ric^{g(t)}Y+
\tfrac{d}{dt}Y_{(p,t)},\,\,\nabla_Y{\frac{\partial}{\partial t}}=-t\Ric^{g(t)}Y-\tfrac{1}{2}Y
\]
for vectorfields $X,Y$ in $N= M\times [0,T)$ tangential to $M$. 
Notice that  $\nabla$ is neither  torsion free nor Riemannian with respect to a background metric.
However the distribution $TM\times [0,T)$ is parallel with respect to  
$\nabla$ and $\nabla$ respects the metric induced on this distribution by the 
background metric $\ml \cdot,\cdot\mr$.
The affine space $\tfrac{\partial}{\partial t}_{|(t,p)}+T_pM$ is also invariant
under the parallel transport with respect to $\nabla$.

The holonomy group of $\nabla$ is thus in a natural fashion isomorphic 
to a subgroup of $\Iso(T_pM)$. In fact for a closed curve $\gamma$ at $(p,t)$
in $N$ the parallel transport $\Par_\gamma$ is determined by the 
linear isometry $\Par_{\gamma|(T_pM,g(t))}$ and a translational part $\tau(\Par_{\gamma})\in T_pM$ 
characterized by $\Par_{\gamma}(\tfrac{\partial}{\partial t})=\tfrac{\partial}{\partial t}+\tau(\Par_{\gamma})$.
The map
\[
\Par_{\gamma}\mapsto (\Par_{\gamma|(T_pM,g(t))},\tau(\Par_{\gamma}))\in\Or(T_pM,g(t))\rtimes T_pM
\] 
is a homomorphism.

We identify $\iso(T_pM)=\Lambda^2T_pN=\so(T_pM)\oplus \{ \tfrac{\partial}{\partial t}\wedge v\mid v\in T_pM\}$ 
where 
we view $\so(T_pM)\cong \Lambda^2T_pM$ as the vector space of skew adjoint endomorphism
endowed with the scalar product $\ml X,Y \mr = -\tfrac{1}{2}\trace(XY)$, the second summand
$\tfrac{\partial}{\partial t}\wedge T_pM$ is orthogonal to $\so(T_pM)$
and the scalar product is given by $\ml \tfrac{\partial}{\partial t}\wedge v, \tfrac{\partial}{\partial t}\wedge w\mr =\ml v,w \mr$. 
The Lie bracket is given by
\[
[(X+\tfrac{\partial}{\partial t}\wedge v),Y+\tfrac{\partial}{\partial t}\wedge w)]=XY-YX+\tfrac{\partial}{\partial t}\wedge X w
-\tfrac{\partial}{\partial t}\wedge Y v.
\]

Notice that the holonomy group of $N$ with respect to $T_pN$ acts naturally on 
$\iso(T_pM)$. It is straightforward to check that this action corresponds to the coadjoint 
representation of $\Iso(T_pM)$ in $\iso(T_pM)$ given by $g\mapsto Ad^{tr}_{g^{-1}}$.

For $A,B\in \iso(T_pM)$ we define $\ad_A$ as usual $\ad_AB=[A,B]$ and let $\ad_A^{tr}$ 
denote the dual endomorphism and $\ad_{\cdot}^{tr}B$  the endomorphism
$A\mapsto  \ad_{A}^{tr}B$. It is straightforward to check that $\ad_{\cdot}^{tr}B$ 
is skew adjoint: $\ml \ad_{\cdot}^{tr}B(A),A\mr=\ml B,[A,A]\mr =0$. 

We extend the bilinear map $(A,B)\mapsto \ad_{A}^{tr}B$ 
to a complex bilinear map 
\[
\ad^{tr}\colon \iso(T_pM)\otimes_{\R}\C\times \iso(T_pM)\otimes_{\R}\C\rightarrow \iso(T_pM)\otimes_{\R}\C.
\]

Although we defined $\Har(p,t)$ as a self adjoint endomorphism 
of $\iso(T_pM,g(t))$, 
we view it as $(4,0)$-tensor in order to define
 $\Delta \Har$:
 Choose a basis
 $b_1,\ldots,b_k$ of the Lie algebra $\iso((T_pM,g(t)))$.  
For $v\in T_pM$ and small $s$ we define $b_i(\exp(sv))$ 
as the parallel extension of $b_i$ with  respect to the connection $\nabla$ on $N$ defined by \eqref{nabla} along the geodesic $\exp(sv)$ in $(M,g(t))$.
Then $\Delta\Har(p,t)$ is the selfadjoint endomorphism of 
$\iso(T_pM,g(t))$ characterized by 
\[
 \bml\Delta \Har(p,t)b_i,b_j\bmr=\sum_{k=1}^n\tfrac{d^2}{ds^2}_{|s=0}
\bml \Har(b_i(\exp(se_k))),b_j(\exp(se_k))\bmr.
\]
where $e_1,\ldots,e_n$ is an orthonormal basis of $(T_pM,g(t))$.

\begin{thm}\label{har evolution} $\Har$ satisfies the tensor identity 
\[
 \nabla_{\tfrac{\partial}{\partial t}}\Har=\Delta \Har+2(\Har\pr \Har +\Har^\#)
\]
 where $\nabla$ is the connection on $N$ defined by \eqref{nabla}  
and $(\Delta \Har)$ is defined as above.
\end{thm}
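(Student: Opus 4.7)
The plan is to verify the identity pointwise at an arbitrary spacetime point $(p, t_0)\in N$ and reduce it to Hamilton's original 1993 Harnack evolution equation by a careful bookkeeping argument. Since the identity is tensorial, I may choose a convenient frame: pick $e_1,\ldots,e_n$ orthonormal at $(p,t_0)$ diagonalizing $\Ric^{g(t_0)}$, extend radially by $g(t_0)$-parallel transport along geodesics so that $\nabla^{g(t_0)} e_i = 0$ at $p$, and extend in time by $\nabla_{\partial/\partial t}e_i=0$ with respect to the Chow--Chu connection \eqref{nabla}. In this frame, the basis $\{e_i\wedge e_j,\ \tfrac{\partial}{\partial t}\wedge e_k\}$ of $\iso(T_pM)$ is $\nabla$-parallel at $(p,t_0)$, so the covariant derivatives reduce to honest coordinate derivatives of the components of $\Har$.

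With this normalization, the left-hand side $(\nabla_{\partial/\partial t}\Har)(b_i,b_j)$ becomes $\partial_t\Har(b_i,b_j)$ evaluated at $(p,t_0)$; but because the frame $b_i$ was chosen $\nabla$-parallel, the $\partial_t$-derivative differs from the usual time derivative of the scalar components by correction terms involving $\Ric$ and $\grad \scal$ — precisely the terms one reads off from the definition of $\nabla_{\partial/\partial t}$ on $Y$ and on $\partial/\partial t$ in \eqref{nabla}. Similarly, $\Delta\Har(b_i,b_j)$ is the ordinary spatial Laplacian of the components, with additional terms coming from the failure of $\nabla$-parallel transport along $\exp(se_k)$ to coincide with $g(t_0)$-parallel transport; these contribute curvature terms of the form $\Rm(e_k\wedge b_i,e_k\wedge\cdot)$ acting on $\Har$. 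First I would compute these correction terms on each of the three blocks $(X\wedge Y, W\wedge Z)$, $(X\wedge Y, \partial_t\wedge Z)$, and $(\partial_t\wedge X,\partial_t\wedge Y)$ of $\Har$, keeping careful track of powers of $t$.

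Next I would plug in Hamilton's classical evolution equations at the level of components: Uhlenbeck's identity $\partial_t R_{ijkl} = \Delta R_{ijkl} + 2(R^2 + R^{\#})_{ijkl}$ for the $\Lambda^2T_pM$-block, the second Bianchi-derived evolution of $\nabla\Ric$ for the mixed block, and Hamilton's evolution of $\Delta\Ric - \tfrac{1}{2}\Hess(\scal)+\cdots$ for the top block. After this substitution the spatial-Laplacian parts cancel against $\Delta\Har$, and what remains are the quadratic curvature terms. The heart of the proof is then to recognize that these quadratic remainders, once combined with the correction terms produced by the Chow--Chu connection in the previous step, assemble into exactly $2(\Har\pr\Har + \Har^\#)$: the composition $\Har\pr\Har$ accounts for the contractions running through the $\so(T_pM)$-block of $\iso(T_pM)$, while $\Har^\#$ accounts for the Lie-bracket contractions $\ad_\cdot^{tr}$ on the full isometry algebra, including the mixed terms $\tfrac{\partial}{\partial t}\wedge Xw - \tfrac{\partial}{\partial t}\wedge Yv$ from the bracket formula displayed above.

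The hard part, and really the only substantive obstacle, is the algebraic matching in the last step. In particular: verifying that the extra $\tfrac{t}{2}\Ric(X,Y)$ and $-t^2\Ric(\Ric X,Y)$ pieces in the top block of $\Har$ are precisely what is needed so that the bracket terms $[\tfrac{\partial}{\partial t}\wedge X,\tfrac{\partial}{\partial t}\wedge Y]=0$ and the projection $\pr$ onto $\so(T_pM)$ conspire to reproduce Hamilton's quadratic terms $2\Ric(e_i,e_i)\Rm(e_i\wedge X,e_i\wedge Y)$; and verifying the $\tfrac{2}{t}\Har$-type lower-order term arises correctly from the explicit $t$-weights in the definition of $\Har$ (as opposed to $\Ham$). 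This is essentially the reformulation carried out implicitly by Chow--Chu [1995] and Brendle [2009], so I would at this point invoke their component computations rather than rederive them, keeping the $\Iso(\R^n)$-equivariant packaging as the novel content.
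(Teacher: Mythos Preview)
Your approach is correct in outline and would certainly work, but it takes a longer path than the paper. The paper does not recompute from Hamilton's evolution equations at all: it simply quotes Brendle's already-established tensor identity
\[
D_{\partial/\partial t}\Ham = \Delta\Ham + \tfrac{2}{t}\Ham + 2(\Ham\pr\Ham + \Ham^\#)
\]
for the unrescaled operator $\Ham$ with respect to the torsion-free Chow--Chu--Brendle connection $D$, and then observes that the passage from $(\Ham, D)$ to $(\Har, \nabla)$ is a straightforward change of variables: $\Har$ is obtained from $\Ham$ via the rescaling $\tfrac{\partial}{\partial t}\wedge Z \mapsto t\,\tfrac{\partial}{\partial t}\wedge Z$, and the two connections differ only by explicit powers of $t$. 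This rescaling absorbs the $\tfrac{2}{t}\Ham$ term and yields the claimed identity directly. No block-by-block component computation or re-matching of quadratic terms is needed.

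Your plan, by contrast, sets up the direct computation in the $(\Har,\nabla)$ frame and then, at the hard algebraic step, appeals to Chow--Chu and Brendle anyway. This is slightly awkward: Brendle's computation is carried out for $(\Ham, D)$, not $(\Har,\nabla)$, so to invoke it you must either redo the algebraic matching in your frame or perform the rescaling --- and once you perform the rescaling you have recovered the paper's argument. What the paper's route buys is a clean separation of concerns: the genuinely new content (that $\nabla$ is nonsingular at $t=0$ and has holonomy in $\Iso(T_pM)$, making the maximum principle work) is isolated from the hard analytic computation, which is outsourced wholesale to Brendle's published identity.
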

The proof of Theorem~\ref{har evolution} follows from Brendle [2009]. 
He derived a similar tensor identity for $\Ham$ using the 
following torsion free connection that is similar to the one introduced by Chow and Chu [1995].
\[
 D_XY= \nabla^{g(t)}_XY, \,\, 
D_{\frac{\partial}{\partial t}}\tfrac{\partial}{\partial t}=-\tfrac{1}{2}\grad^{g(t)}(\scal)-\tfrac{3}{2t}\tfrac{\partial}{\partial t}
\]
\[
D_{\frac{\partial}{\partial t}}Y=-\Ric^{g(t)}Y-\tfrac{Y}{2t}+
\tfrac{d}{dt}Y_{(p,t)},
\]
By Brendle the operator $\Ham$ satisfies 
the tensor identity
\begin{eqnarray}\label{evolution Ham}
 D_{\tfrac{\partial}{\partial t}}\Ham=\Delta \Ham+\tfrac{2}{t}\Ham+ 2(\Ham\pr \Ham +\Ham^\#)
\end{eqnarray}
We should mention that Brendle has a different but equivalent definition of the algebraic expression
$(\Ham\pr \Ham +\Ham^\#)$.
>From this equation Theorem~\ref{har evolution} follows by a straightforward calculation. 

The advantage of Theorem~\ref{har evolution} over \eqref{evolution Ham} 
is that the former  is nonsingular at $t=0$ and  since the connection is fairly 
natural with respect to $\ml\cdot,\cdot\mr$ it is easy to establish a dynamical version of 
the maximum principle. 
On the other hand $D$ has similar properties to $\nabla$ provided we endow $N$
with the background metric

\[g(v,w)=\tfrac{1}{t}g(t)(v,w),\,\,\, 
g(v,\tfrac{\partial}{\partial t})=0,\,\,\, g(\tfrac{\partial}{\partial t},\tfrac{\partial}{\partial t})=\tfrac{1}{t^{3}}
\]
The curvature tensor of $D$ is given by $\tfrac{1}{t}\Ham$.
Recently Cabezas-Rivas and Topping [2009] found a sequence $g^k$ of metrics on $N$ 
with the property that 
$g^{k}(v,w)=\tfrac{1}{t}g(t)(v,w)$ and $g^{k}(\tfrac{\partial}{\partial t},w)=0$ for $v,w\in T_pM$. 
The only constant
 $g^{k}(\tfrac{\partial}{\partial t},\tfrac{\partial}{\partial t})$
depending on $k$ diverges to infinity for $k\to \infty$. 
However the Levi Cevita connection of these metrics converge in the $C^\infty$ topology to
$D$. In particular the curvature tensor converges to $\tfrac{1}{t}\Ham$.
 Moreover $(M,g^k)$ is a Ricci soliton up to order $\tfrac{1}{k}$. 
 Cabezas-Rivas and Topping are then able to derive 
\eqref{evolution Ham} from the evolution of a curvature tensor under the Ricci flow.

We now turn to the proof of  Theorem~\ref{thm: max}.
Since $C(t)$ is invariant under the representation 
of $\Iso(\R^n)$ we can identify it naturally 
with a subset of $S^2_B(\iso(T_pM,g(s)))$ for all $(p,s)\in N$.  

We choose an auxiliary  smooth tensor field
 $T$ such that $T(p,t)$ is  a selfadjoint endomorphism
of $\iso(T_pM,g(t))$ representing 
an interior point of the closed convex set $C(t)$.

\begin{lem} Any tensor $\Har$ on $N$
satisfying 

\[
 \nabla_{\tfrac{\partial}{\partial t}}\Har=\Delta \Har+2(\Har\pr \Har +\Har^\#)
\]
can be approximated by a sequence $S_k$ of tensors
 on $M\times\bigl[\tfrac{1}{k},T-\tfrac{1}{k}]$ satisfying

\[
 \nabla_{\tfrac{\partial}{\partial t}}S_k=\Delta S_k+2(S_k\pr S_k +S^\#_k)+\eps_k(T-S_k)
\]
and $S(p,\tfrac{1}{k})$  represents an interior point of
$C(\tfrac{1}{k})$ and $\eps_k>0$ converges to $0$. 

\end{lem}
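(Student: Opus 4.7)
The plan is to solve a perturbed semilinear parabolic Cauchy problem whose initial data lies strictly inside $C(1/k)$, then let both the initial displacement and the perturbation shrink with $k$. Fix sequences $\delta_k, \eps_k \downarrow 0$, set $t_k = 1/k$, and prescribe the initial condition
\[
S_k(\cdot, t_k) = (1 - \delta_k)\Har(\cdot, t_k) + \delta_k T(\cdot, t_k).
\]
In the intended application of the lemma $\Har(\cdot, t_k)$ lies in $C(t_k)$ (this is obtained from $\Har(\cdot, 0) \in C(0)$ by continuity for small $t_k$), so the convex combination with the interior point $T(\cdot, t_k)$ is itself interior to the convex set $C(t_k)$ for every $\delta_k > 0$: if a small metric ball around $T$ lies in $C$, then a correspondingly scaled ball around $S_k(\cdot, t_k)$ does as well by convexity. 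Short-time existence for the semilinear system
\[
\nabla_{\tfrac{\partial}{\partial t}} S_k = \Delta S_k + 2(S_k \pr S_k + S_k^\#) + \eps_k(T - S_k)
\]
on the compact manifold $M$ then follows from standard theory of parabolic systems, since the reaction term is polynomial in $S_k$ with smooth coefficients and the principal part is the Laplacian of the background metric.

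To extend $S_k$ to the full slab $M \times [t_k, T - 1/k]$ and to control the approximation, I would work with the error $U_k := S_k - \Har$. Subtracting the two evolution equations yields a linear parabolic equation for $U_k$ with smooth coefficients depending on $\Har$ and $S_k$, initial data of $C^0$-size $O(\delta_k)$, and an inhomogeneous forcing term of $C^0$-size $O(\eps_k)$ (coming from $\eps_k(T - \Har)$). Standard $L^\infty$-estimates for linear parabolic equations then produce
\[
\|U_k\|_{C^0(M \times [t_k, T - 1/k])} \le C_k(\delta_k + \eps_k),
\]
where $C_k$ depends on $T$ and on sup-norms of $\Har$ and $T$ over the interval. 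Choosing $\delta_k, \eps_k$ sufficiently small delivers both the convergence $S_k \to \Har$ on compact subsets of $M \times (0, T)$ and the extendibility of $S_k$ to the full interval.

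The main obstacle is the quadratic reaction $S_k \pr S_k + S_k^\#$: without an a priori bound on $S_k$ one cannot rule out finite-time blow-up, so the linear estimate above cannot be quoted directly. This is handled by a bootstrap in $t$: on any subinterval of $[t_k, T - 1/k]$ on which $U_k$ stays a priori bounded by a fixed small constant, the coefficients of the linear equation satisfied by $U_k$ are controlled via the smooth $\Har$, and the linear $L^\infty$-estimate produces a strictly smaller bound on $U_k$. A standard open-closed continuity argument in $t$ then propagates this bound to the full interval. Once $S_k$ is bounded in $C^0$, no blow-up can occur, and the construction is complete.
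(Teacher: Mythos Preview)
Your overall approach matches the paper's brief sketch: pick initial data in $\mathrm{Int}(C(1/k))$ close to $\Har(\cdot,1/k)$, pass to the difference $U_k = S_k - \Har$, and use parabolic estimates on the resulting equation to obtain existence on the full interval. The paper simply cites Shi-type a priori estimates for the linearized equation; your $L^\infty$ bootstrap is a reasonable way of spelling out that step.

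There is, however, a circularity in your justification that $S_k(\cdot, t_k) \in \mathrm{Int}(C(t_k))$. You assert that ``$\Har(\cdot, t_k)$ lies in $C(t_k)$'', claiming this follows from $\Har(\cdot, 0) \in C(0)$ by continuity. But the statement $\Har(\cdot, t) \in C(t)$ for $t > 0$ is precisely the conclusion of the maximum principle (Theorem~\ref{thm: max}) that this lemma is being used to prove; continuity of $\Har$ in $t$ only tells you that $\Har(p, t_k)$ is close to $\Har(p, 0) \in C(0)$, not that it lies in the possibly different set $C(t_k)$. The fix is to combine continuity of $\Har$, of $T$, and of the family $t \mapsto C(t)$: since $\Har(p, 0) \in C(0)$ and $T(p,0)\in\mathrm{Int}(C(0))$, the distance from $\Har(p, t_k)$ to $C(t_k)$ tends to zero uniformly in $p$ by compactness of $M$, so one can indeed choose interior points $S_k(p, t_k)$ converging to $\Har(p, t_k)$. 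Your convex-combination ansatz then works once $\delta_k$ is chosen large enough relative to this distance while still having $\delta_k \to 0$.
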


Clearly one can find an initial value $S(p,\tfrac{1}{k})\in Int(C\delta_k)$ 
such that $S(p,\tfrac{1}{k})-\Har(p,\tfrac{1}{k})$ ($p\in M$) converges to $0$ 
in the $C^\infty$ topology. 
Moreover, $S(p,\tfrac{1}{k})$ is a solution 
if and only if $S_k-\Har$ is a solution of an equation with the obvious modifications. 
Since one can prove similarly to Shi a priori estimates
for the corresponding linearized equation, it follows
that a solution of the initial value problem exists.

\begin{proof}[Proof of Theorem~\ref{thm: max}.]
Since $C(s)\subset S^2_B(\Lg)$ is invariant under $\Iso(\R^n)$ 
we can identify it naturally with a subset of 
$S^2_B(\iso(T_pM,g(t))$ for all $(p,t)\in N$.

It suffices to prove that $S_k(p,t)\in C(t)$ ($t\in \bigl[\tfrac{1}{k},T-\tfrac{1}{k}\bigr]$)
for a sequence  $S_k$ as in the lemma.
We assume, on the contrary, that for some minimal $t_0>\tfrac{1}{k}$ 
we can find some $p\in M$
such that $S_k(p,t_0)$ is contained in the boundary of $C(t_0)$. 

Because of the minimal choice of $t_0$ we know 
that $S_k(q,t)\in C(t)$ for all $t\le t_0$ and $q\in M$.
To get a contradiction we will show that $S_k(p,t_0-h)\not\in C(t_0-h)$ 
for small positive $h$.

For small $h\ge 0$ and $q\in M$ we 
define $H(s)\in S^2_B(\iso(T_q,g(t_0-h)))$
as the solution of the ODE
$H'=2(H\pr H+H^\#)$
with $H(0)=S_k(t_0-h)$. 
Since the family $C(t)$ is invariant under the ODE 
we know that $P_k(q,t_0-h):=H(h)\in C(t_0)$.
By construction
\[
 \nabla_{\tfrac{\partial}{\partial t}}P_k(p,t_0)=\Delta P_k(p,t_0)+\eps_k(T-P_k)(p,t_0)
\]

Using that $P_k(q,t_0)=S_k(q,t_0)\in C(t_0)$
and that $C(t_0)$ is invariant under the representation of 
$\Iso(\R^n)$ it is immediate that $\Delta P_k(p,t_0)=T_{P_k(p,t_0)}C(t_0)$.
Furthermore we know by construction that $\eps_k(T-P_k)(p,t_0)$ 
is contained in the interior of the tangent cone $T_{P_k(p,t_0)}C(t_0)$. 
We deduce that $P_k(p,t_0-h)\not\in C(t_0)$ for small positive $h$
-- a contradiction.
\end{proof}

\begin{rem}\label{rem: k}
\begin{enumerate}\item[a)] If one carries out everything in this section in the special case that 
$(M,g(t))$ is a K\"ahler manifold, then the holonomy group
of the connection $\nabla$ is isomorphic to a subgroup of $\gU(T_pM)\rtimes T_pM\subset \SO(T_pM)\rtimes T_pM$ 
and the image of the Harnack operator is contained in the Lie subalgebra $\Lg'$ of this group.
One can then formulate and prove an analogous statement for Harnack operators 
of  K\"ahler manifolds.
\item[b)] Let $\Har$ be a Harnack operator 
and  $R=\Har_{|so(n)}$.  A simple computation shows that the trace Harnack inequality 
is equivalent to \[\inf\{ \trace\bigl(\Ad_v\Har \Ad_v^{tr})\mid v\in \R^n\subset \Iso(\R^n)\} -\trace(R)\ge 0.\]
If $R$ has positive Ricci curvature, 
then there is a unique $v\in \R^n$ such that $\Ad_v\Har\Ad_v^{tr}$
has minimal trace.
\item[c)] The reason for the somewhat complicated approach toward the 
maximum principle is that it is in general not true that  
a convex set $C(t)$ is contained in the interior of another slightly larger convex 
set $C$ which is also invariant under the action of $\Iso(\R^n)$, cf. next subsection.
This is of course related to the fact that the connection is
 not compatible with a metric on the space of curvature tensors.
\end{enumerate}
\end{rem}

\subsection{Some negative results on Harnack inequalities}
It is elementary to check 
that the subspaces

\begin{eqnarray*}
V&:=&\bigl\{\Har\in S^2_B(\Lg)\mid \ml \Har(v),w\mr=0 \mbox{ for all $v,w\in\so(n)$}\bigr\}\\
W&:=&\bigl\{\Har\in S^2_B(\Lg)\mid \so(n)\subset \mathrm{kernel}(\Har)\bigr\}
\end{eqnarray*}
are invariant under the representation of $\Iso(\R^n)$.
The space $W$ can be characterized as the fixed point 
set of the normal subgroup $\R^n\subset \Iso(\R^n)$.

If 
the convex sets $C(t)$ in Theorem~\ref{thm: max} 
have the form $C'+V$ for some subset 
$C'\subset S^2_B(\Lg)$, then the condition $C(t)$ 
can only provide restrictions for the curvature tensor 
$R=\Har_{|\so(n)\times\so(n)}$.

\begin{lem} Let $C\subset S^2_B(\Lg)$ be a closed convex
set of maximal dimension which is invariant under $\Iso(\R^n)$.
Suppose that $C$ is not of the form $C'+V$.
After possibly replacing $C$ by $-C$ the following holds. 
For every $\Har\in C$ the restriction $\Har_{|\so(n)\times \so(n)}$ 
is a curvature operator with nonnegative Ricci curvature.
\end{lem}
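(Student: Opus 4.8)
The plan is to analyze what the invariance of $C$ under $\Iso(\R^n) = \SO(n)\ltimes\R^n$ forces, starting from the normal subgroup $\R^n$. For $v\in\R^n\subset\Iso(\R^n)$, the action $\Har\mapsto\Ad_v\Har\Ad_v^{tr}$ fixes the subspace $\so(n)\subset\iso(\R^n)$ pointwise (translations act trivially on rotations), so $\Ad_v$ has the block-triangular form that is the identity on $\so(n)$ and maps $\tfrac{\partial}{\partial t}\wedge w$ to $\tfrac{\partial}{\partial t}\wedge w$ plus a term in $\so(n)$ depending linearly on $v$ and $w$. Consequently, for $\Har\in C$ the restriction $R:=\Har_{|\so(n)\times\so(n)}$ and the ``mixed'' block $\Har_{|\so(n)\times(\tfrac{\partial}{\partial t}\wedge\R^n)}$ transform in a controlled way, while the bottom-right block $\Har_{|(\tfrac{\partial}{\partial t}\wedge\R^n)^2}$ absorbs all the $v$-dependence. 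First I would write out these transformation rules explicitly in a fixed basis $e_1,\dots,e_n$ of $\R^n$, so that the $\R^n$-orbit of a given $\Har$ is described by an affine family parametrized by $v\in\R^n$.

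Next I would use convexity and closedness together with maximal dimension of $C$. Because $C$ is $\R^n$-invariant, it contains, for each $\Har\in C$, the closed convex hull of the entire affine orbit $\{\Ad_v\Har\Ad_v^{tr}: v\in\R^n\}$. The key observation is that this orbit is an affine subset of $S^2_B(\Lg)$ whose ``direction space'' is a fixed subspace $U$ (the span of the infinitesimal generators $\tfrac{d}{ds}\big|_{0}\Ad_{sv}\Har\Ad_{sv}^{tr}$); one computes that, modulo lower-order pieces, $U$ is spanned by operators supported in the mixed and bottom-right blocks. If $U$ were all of $V$ (the space where the $\so(n)\times\so(n)$ block vanishes), then $C$ would be invariant under translation by $V$ and hence — using that $C$ is closed, convex, of maximal dimension — would be of the form $C'+V$, contrary to hypothesis. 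So the hypothesis that $C\ne C'+V$ forces the orbit directions to fail to fill out $V$; unwinding the block structure, this failure must occur in the bottom-right block, and it pins down a quadratic constraint relating $\Har$ on the bottom-right block to $R$ on $\so(n)$.

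The heart of the argument is to extract, from that constraint, a statement purely about $R$. Here I would exploit Remark~\ref{rem: k}b): for $v\in\R^n$, $\trace(\Ad_v\Har\Ad_v^{tr})$ is a quadratic polynomial in $v$ whose leading (degree-two) coefficient is, up to a positive constant, the Ricci form of $R$ evaluated on $v$, i.e. $\sum_i \Rm(e_i\wedge v, e_i\wedge v) = \Ric_R(v,v)$. Since $C$ is convex and of maximal dimension, and contains the full orbit, considering $\tfrac{1}{\lambda^2}\Ad_{\lambda v}\Har\Ad_{\lambda v}^{tr}$ and letting $\lambda\to\infty$ exhibits (after normalizing) a limiting element of $\partial_\infty C$ lying in $V$; invariance of $C$ together with $C\ne C'+V$ forces this limit to satisfy a definiteness constraint, and the only $\Iso(\R^n)$-invariant, convex such constraint that does not already split off $V$ is $\Ric_R\ge 0$ (up to the sign ambiguity, which is why one may have to replace $C$ by $-C$). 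I would make this precise by observing that the asymptotic cone $\partial_\infty C\cap V$ is a convex $\Iso(\R^n)$-invariant cone in $V\cong S^2(\R^n)$ (identifying the bottom-right block with symmetric bilinear forms on $\R^n$), hence is either $\{0\}$, a halfspace, $\{\Ric\ge0\}$-type cone, its negative, or all of $V$; the extreme cases are excluded by $C\ne C'+V$ and by maximality, and matching the surviving case against the degree-two coefficient above yields $\Ric_R\ge 0$ for every $\Har\in C$.

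The main obstacle I expect is the bookkeeping in the middle step: correctly identifying the direction space $U$ of the affine orbit inside the three-block decomposition of $S^2_B(\Lg)$, and checking that ``$U$ does not fill $V$'' genuinely localizes to the bottom-right block rather than to the mixed block — one must rule out the possibility that the defect lives entirely in the first-Bianchi constraints of $S^2_B$, which requires knowing how the Bianchi identity interacts with the $\iso(\R^n)=\so(n)\oplus(\tfrac{\partial}{\partial t}\wedge\R^n)$ splitting. Once that localization is established, translating it into the Ricci-nonnegativity statement via the trace computation of Remark~\ref{rem: k}b) is comparatively routine.
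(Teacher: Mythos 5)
Your first and last moves do line up with the paper's: the trace of $\Ad_{tv}\Har\Ad_{tv}^{tr}$ grows quadratically in $t$ with leading coefficient a positive multiple of $\Ric(v,v)$ for $R=\Har_{|\so(n)\times\so(n)}$, the rescaled limits $\lim_{t\to\infty}t^{-2}\Ad_{tv}\Har\Ad_{tv}^{tr}$ lie in $\partial_\infty C$, and proving $V\subset\partial_\infty C$ is exactly what forces $C=C'+V$. The middle of your argument, however, has genuine gaps. First, the $\R^n$-orbit $\{\Ad_v\Har\Ad_v^{tr}\mid v\in\R^n\}$ is not an affine subset: the map $v\mapsto\Ad_v\Har\Ad_v^{tr}$ is quadratic in $v$, so there is no fixed ``direction space $U$'', and the step ``if $U=V$ then $C$ is translation-invariant in the $V$-directions'' has no basis as stated. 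Second, you conflate $V$ with the bottom-right block: the space isomorphic to $S^2(\R^n)$ is $W$ (operators with $\so(n)$ in the kernel), while $V$ also contains the mixed $\so(n)\times\R^n$ block, and that block is precisely where the work lies. The paper argues by contradiction with \emph{two} elements: from $\Har_1,\Har_2\in C$ and directions with $\Ric_1(v_1,v_1)>0$, $\Ric_2(v_2,v_2)<0$ it gets elements of $\partial_\infty C\cap W$ of both signs of trace and with nonzero traceless part, hence $W\subset\partial_\infty C$ by convexity, $\SO(n)$-invariance and the decomposition of $W$ into trivial plus irreducible; then it uses the maximal dimension of $C$ to choose $\Har$ and $v$ so that $\Ad_v\Har\Ad_v^{tr}-\Har$ is generic modulo $W$, and a linear-growth argument modulo $W$ (subtracting the best $Y(t)\in W$ from $\Ad_{tv}\Har\Ad_{tv}^{tr}$) to produce an element of $\partial_\infty C\cap V$ whose class generates $V/W$, giving $V\subset\partial_\infty C$ and the contradiction. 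Your proposal has no substitute for this second stage.

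Third, the classification you invoke --- that an $\Iso(\R^n)$-invariant closed convex cone in the relevant block is ``$\{0\}$, a halfspace, a $\Ric\ge0$-type cone, its negative, or everything'' --- is unjustified and in fact false: $\SO(n)$-invariant convex cones in $S^2(\R^n)$ form a large family (for instance $\{A\mid\trace A\ge c\,\|A\|\}$ for any $c\ge0$, the positive semidefinite cone, and many cones in between), so the desired conclusion cannot be read off from such a list. Note finally that the lemma's conclusion (``after possibly replacing $C$ by $-C$'') is a statement about the impossibility of $C$ containing two elements whose $\so(n)$-blocks have Ricci of opposite signs; an argument that analyzes a single $\Har\in C$ at a time, as yours essentially does, cannot capture this dichotomy.
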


\begin{proof}
Suppose, on the contrary, we can find $\Har_i\in C$ and $v_i\in \R^n$ 
such that the Ricci curvature $\Ric_i$ of $\Har_{i|\so(n)\times\so(n)}$
satisfies the following ($i=1,2$): $\Ric_1(v_1,v_1)>0$ and $\Ric(v_2,v_2)<0$.
Put $\Har_i(t)=\Ad_{tv_i}\Har_i\Ad^{tr}_{tv_i}$.
It is straightforward to check that the trace
$\trace(\Har_i(t))$ converges quadratically in $t$ to $(-1)^{i+1}\infty$ 
as $t\to \infty$ ($i=1,2$).
The element $X_i=\lim_{t\to\infty} \tfrac{\Har_i(t)}{t^2}$ 
is an element in the cone at infinity 
\[
\partial_{\infty}C:=\lim_{\lambda\to \infty}\tfrac{C}{\lambda}\]
of $C$.
We know $\trace(X_1)>0$, $\trace(X_2)<0$ and 
it is straightforward to check $X_i\in W$. 
Moreover the operator $X_i$ has $v_i\in \R^n$ 
in its kernel.

Clearly the cone $\partial_{\infty}C$ 
is invariant under the the representation of $\Iso(\R^n)$.
The barry center of the $\SO(n)$-orbit of $X_1$ (resp. $X_2$)
is a positive (resp. negative) multiple of the orthogonal projection 
of $\Lg$  to $\R^n$.

 Under $\SO(n)$ the vectorspace $W$ decomposes 
into a one dimensional trivial 
and an irreducible representation.
Since $X_i\in W$ itself is not a multiple of 
the orthogonal projection we can  
 also find 
a traceless operator $X\in \partial_\infty C\cap W$.
Clearly this implies $W\subset \partial_\infty C$.

The quotient space 
$V/W$ decomposes under $\SO(n)$ in two 
inequivalent irreducible nontrivial subrepresentations. 

Using that $C$ has maximal dimension 
we can find $\Har\in C$ such that for some 
$v\in \R^n$, $\bigl(\Ad_v\Har\Ad_v^{tr}-\Har\bigr)\in V$ projects 
to an element in $V/W$ which 
is not  contained in a nontrivial invariant subspace.

For each $t$ we choose $Y(t)\in W$ 
such that $L(t):=\Ad_tv\Har\Ad_tv^{tr}-Y(t)$
has minimal norm. 
It then follows that the norm of $L$ increases linearly 
and $L_\infty:=\lim_{t\to\infty}\tfrac{L(t)}{t}\in V\cap \partial_{\infty}C$ 
corresponds in $V/W$ to an element that does not lie in a nontrivial 
invariant subspace. 

This in turn shows $V\subset \partial_{\infty}C$ 
and hence $C$ is of the form $C'+V$.

\end{proof}

\begin{lem} Let $C\subset S^2_B(\Lg)$ be a convex subset
of maximal dimension which is invariant under the representation of $\Iso(\R^n)$. 
Suppose there is an element $\Har\in C$ such that 
$R:=\Har_{|\so(n)\times\so(n)}$ satisfies for some 
$v\in \R^n$: $\Ric(v,v)=0$
 and $R(\cdot,v,v,\cdot)\neq 0$. 
Then for any family of convex sets $C(t)$ 
which is invariant under the ODE, invariant under $\Iso(\R^n)$ 
with $C(0)=C$ we have 
$C(t)= C(t)+V$ for all $t>0$.
\end{lem}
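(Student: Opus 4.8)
The plan is to adapt the technique used in the proof of the previous lemma (the one showing $C = C' + V$ under a sign condition), feeding in the ODE invariance to squeeze a little more out of it. The starting datum is a single $\Har \in C$ whose restriction $R = \Har_{|\so(n)\times\so(n)}$ has a null Ricci direction $v$ with $R(\cdot,v,v,\cdot)\neq 0$; I want to produce, at every positive time $t$, enough elements of $\partial_\infty C(t)$ to conclude $V \subset \partial_\infty C(t)$, which by convexity and maximal dimension forces $C(t) = C(t) + V$. The new ingredient compared to the previous lemma is that here we are \emph{not} allowed to assume a sign (no $\Ric \geq 0$), so instead of reading off $W \subset \partial_\infty C$ from a trace blow-up we must use the evolution to generate the translational slack.

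First I would run the ODE starting from $\Har$ for a short time to get $\Har(t) \in C(t)$ for $t$ small, and examine the Harnack evolution $\Har' = 2(\Har\pr\Har + \Har^\#)$ restricted to the block structure $\iso(\R^n) = \so(n) \oplus (\tfrac{\partial}{\partial t}\wedge \R^n)$. The key computation is to show that the condition $\Ric(v,v) = 0$ together with $R(\cdot,v,v,\cdot)\neq 0$ forces the off-diagonal and lower block of $\Har(t)$ to acquire a nonzero component in the direction $\tfrac{\partial}{\partial t}\wedge v$ that grows; concretely the $\Har^\#$ term, evaluated on $\tfrac{\partial}{\partial t}\wedge v$, produces something like $\sum_i |R(\cdot, e_i, v, \cdot)|$-type quantities which is strictly positive precisely because $R(\cdot,v,v,\cdot)\neq 0$ while the competing term that would drag it back vanishes since $\Ric(v,v)=0$. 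This is the analogue of the quadratic trace blow-up in the previous proof, but now it is the dynamics rather than the conjugation action that supplies it.

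Next, having one nonzero vector $X_1 \in \partial_\infty C(t) \cap W$ of this type, I would average over the $\SO(n)$-action (whose orbit through $X_1$ has a barycenter which is a multiple of the orthogonal projection $\Lg \to \R^n$, exactly as in the previous lemma) and then use that $W$ decomposes under $\SO(n)$ into the trivial line plus one irreducible piece: since $X_1$ itself is not a multiple of the projection (its kernel contains $v$), the $\SO(n)$-span of $X_1$ must be all of $W$, giving $W \subset \partial_\infty C(t)$. Then the quotient $V/W$ decomposes into two inequivalent nontrivial irreducibles, and combining conjugation by $\Ad_{sv}$ with the freedom coming from maximal dimension of $C$ — verbatim the last three paragraphs of the previous proof — I would produce an element of $V \cap \partial_\infty C(t)$ projecting to a generic vector in $V/W$, hence $V \subset \partial_\infty C(t)$, hence $C(t) = C(t) + V$.

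The main obstacle I expect is the first step: pinning down precisely the sign and growth of the $\tfrac{\partial}{\partial t}\wedge v$ component of $\Har(t)$ under the ODE. One has to be careful that $\Ric(v,v)=0$ is only an instantaneous condition — it need not persist — so the argument should either work at the infinitesimal level (showing $\tfrac{d}{dt}\big|_{t=0}$ of the relevant component is nonzero and using that $C(t)$ tracks $C$ to first order, together with the conjugation trick to amplify) or be set up so that the relevant quantity, once positive, stays positive long enough. The clean way is probably to combine the ODE flow for a tiny time with the $\Ad_{sv}$-conjugation blow-up as $s\to\infty$, so that the conjugation does the ``blowing up to infinity'' and the ODE only needs to supply a nonzero seed in the $W$-direction; the rest is then a faithful repetition of the representation-theoretic bookkeeping already carried out in the preceding lemma.
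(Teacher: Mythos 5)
Your endgame (the $\SO(n)$-representation theory in $W$ and $V/W$, convexity, maximal dimension) matches the paper, but the two load-bearing steps are off. First, the seed element of $\partial_\infty C(0)\cap W$ does not come from the dynamics at all: the paper's first line is exactly the conjugation blow-up $X=\lim_{t\to\infty}\Ad_{tv}\Har\Ad_{tv}^{tr}/t^2$, which is nonzero because $R(\cdot,v,v,\cdot)\neq 0$ and \emph{traceless} because $\Ric(v,v)=0$; the tracelessness is the point, since a nonzero traceless element generates, via its $\SO(n)$-orbit and convexity, the whole irreducible summand $W'$ of traceless operators in $W$. Note, however, that this gives only $W'$, not $W$: nonnegative combinations of traceless operators are traceless, so your claim that the orbit of the single $X_1$ yields all of $W\subset\partial_\infty C$ conflates convex-cone generation with linear span. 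The trivial summand of $W$ (multiples of the projection onto $\R^n$) must be earned separately; in the paper this is the trace-unboundedness argument inside $\tilde C(t)=C(t)\cap\Fix(-\id)$, using the a priori bound on the $\so(n)$-block and maximal dimension, and it is needed before the final $V/W$ step, where one subtracts $Y(t)\in W$ of minimal norm and hence needs $C(t)+W=C(t)$, not just $C(t)+W'=C(t)$.

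Second, and more seriously, your mechanism for positive times does not close. Flowing the single element $\Har$ and then conjugating at time $t$ fails because the hypotheses $\Ric(v,v)=0$ and $R(\cdot,v,v,\cdot)\neq 0$ are not preserved by the ODE, as you yourself note; the proposed fixes (an infinitesimal argument, or ``stays positive long enough'') would at best give the conclusion for small $t$, whereas the lemma asserts it for every $t>0$. The paper propagates differently: from $W'\subset\partial_\infty C(0)$ it deduces that $\tilde C(0)$ contains a whole affine subset of the form $\lambda I+W'$ (with $I$ the projection onto $\so(n)$), restricts to the fixed-point set of $-\id\in\Or(n)$ so that this structure is preserved, and evolves this entire set using the ODE-invariance of the family $C(t)$ to conclude $W'\subset\partial_\infty C(t)$ for all $t$, only afterwards upgrading $W'\to W\to V$. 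This transfer through an invariant affine subset, rather than through the single solution starting at $\Har$, is the missing idea; without it or a genuine substitute your argument does not reach arbitrary $t>0$.
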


The lemma shows for example that one can not 
prove a Harnack inequality in the class of
$3$-manifolds with positive Ricci curvature evolving under the Ricci flow
 by means of the maximum
principle of Theorem~\ref{thm: max}.

\begin{proof}
It is straightforward to check
that $X:=\lim_{t\to\infty}\tfrac{\Ad_{tv}\Har\Ad_{vt}^{tr}}{t^2}$
is a traceless operator in $W$. 
Clearly $X\in \partial_{\infty}C$.

Since the traceless operators $W'\subset W$ form an irreducible 
subspace  we deduce 
that $W'\subset \partial_{\infty}C$. 
Let $C(t)$ be as in the lemma.

Consider the element $-\id\in O(n)$
and let $\Fix(-\id)\subset S^2_B(\Lg)$ denote the fixed point 
set of $-\id$. Notice that $\Fix(-\id)$ is still
invariant under $\SO(n)$.
It is easy to see that $\Fix(-id)\cap C$ has maximal dimension
in $\Fix(-\id)$. 
Moreover the set 
$\tilde C(t)=C(t)\cap \Fix(-\id)$
is invariant under the ODE. 

Notice that $\tilde C(0)$ contains 
a subset of the form $\lambda I+ W'$ 
where $I$ is the orthogonal projection of $\Lg$ to $\so(n)$ 
for some $\lambda$. 
If we evolve this set under the ODE 
we see that $W'\subset \partial_{\infty}C(t)$ for all $t$. 

Thus $\tilde C(t)=\tilde C(t)'+W'$ 
where $\tilde C(t)'\subset W'^{\perp}\cap \Fix(-\id)$ 
is convex.

We may assume that the norm of $\Har_{|\so(n)\times \so(n)}$ 
is bounded by some a priori constant for all $\Har\in C(t)$. 
This in turn implies that a sequence in $\tilde C(t)'$ tends to 
$\infty$ if and only if its trace is unbounded.

Using that $\tilde C(t)$ is invariant under the ODE 
and that $C'(t)$ has full dimension
 it is easy to see that 
for all positive $t$  there are endomorphisms
with arbitrary small as well 
as endomorphisms with arbitrary large trace in $\tilde C(t)$ and hence in $\tilde C(t)'$.
Therefore  $W\subset \partial_\infty \tilde C(t)$. 
This implies as before $V\subset \partial_{\infty}C(t)$ 
for all positive $t$ as claimed.

\end{proof}

\section{Proof of Theorem~\ref{mainthm: har}.}\label{sec: mainthm har}
We prove a slightly more general result which holds for any metric Lie algebra.
Let $\Lg$ be a Lie algebra endowed with a scalar product $\ml\cdot,\cdot \mr$.
Put $\Lg_\C=\Lg\otimes_\R\C$ and let $\G$ and $\G_{\C}$ be associated groups.
For a self adjoint endomorphism $R\colon \Lg\rightarrow \Lg$
we define $R^\#\colon \Lg\rightarrow \Lg$ by 
\[
\ml R^\#v,w\mr= -\tfrac{1}{2}\trace \ad_{\cdot}^{tr}vR \ad_{\cdot}^{tr}wR \mbox{ for $v,w \in \Lg$.}
\]
Here $\ad_{\cdot}^{tr}v$ is the map $x\mapsto\ad_{x}^{tr}v$ which in turn is characterized by 
 $\ml\ad_{x}^{tr}v,y\mr = \ml v,[x,y]\mr$.
It is easy to see that  $\ad_{\cdot}^{tr}v$ is skew adjoint 
with respect to the scalar product for each fixed $v\in \Lg $.
If $\G$ is compact, one can choose $\ml\cdot,\cdot\mr$ to be  $\Ad_{\G}$ invariant  and 
then $\ad_{\cdot}^{tr}v=\ad_v$ and we get back the earlier definition of $R^\#$.

Each $R\in S^2(\Lg)$  induces a complex symmetric bilinear form 
on $\Lg_\C$ which we denote by $(x,y)\mapsto R(x,y)$. 
We extend the coadjoint representation $g\mapsto Ad_{g^{-1}}^{tr}$ with $\ml Ad_{g^{-1}}^{tr}x,y\mr= \ml x, Ad_{g^{-1}}y\mr$ 
to a representation of $\G_{\C}$ in $\Lg_{\C}$.

\begin{prop}
Consider the vectorspace $S^2(\Lg)$ of selfadjoint endomorphisms of $\Lg$ endowed with the ODE
  $R'= R^\#$ from above.
Suppose $S\subset \Lg_{\C}$ is invariant under the complexified coadjoint representation of $\G_{\C}$.
Then for each $h\in \R$ the  set 
\[
 C(S,h)=\{R\in S^2(\Lg)\mid R(x,\bar x)\ge h\mbox{ for all $s\in S$}\}
\]
 is invariant under the ODE $R'= R^\#$.
\end{prop}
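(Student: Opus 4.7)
The plan is to mirror the proof of Theorem~\ref{mainthm: ricci} from Section~\ref{sec: ricci} essentially verbatim, replacing the adjoint action of $\G_\C$ on $\Lg_\C$ by the complexified coadjoint action throughout. After passing to the closure of $S$ and perturbing the ODE to $R'=R^\#+\varepsilon\,\id$ in order to handle a possibly non-attained infimum (the boundary-at-infinity analysis using $\partial_\infty S$ transfers unchanged, since $\partial_\infty S$ inherits coadjoint invariance as a cone), the proof reduces to the following infinitesimal claim: if $R\in C(S,h)$ and $v\in S$ satisfies $R(v,\bar v)=h$, then $R^\#(v,\bar v)\ge 0$.

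For the claim, I use that the infinitesimal generator of the coadjoint representation in direction $x$ is $-\ad_x^{tr}$, so that $t\mapsto \exp(-t\,\ad_x^{tr})\,v$ lies in $S$ for every $x\in\Lg_\C$. Hence
\[
h\;\le\;R\bigl(\exp(-t\,\ad_x^{tr})\,v,\;\exp(-t\,\ad_{\bar x}^{tr})\,\bar v\bigr),
\]
with equality at $t=0$. Differentiating twice in $t$ gives
\[
0\;\le\;2R(\ad_x^{tr}v,\,\ad_{\bar x}^{tr}\bar v)+R((\ad_x^{tr})^2v,\,\bar v)+R(v,\,(\ad_{\bar x}^{tr})^2\bar v).
\]
Because $\ad_{ix}^{tr}=i\,\ad_x^{tr}$, replacing $x$ by $ix$ flips the sign of the two outer terms while preserving the middle one; averaging the two inequalities therefore yields
\[
R(\ad_x^{tr}v,\,\ad_{\bar x}^{tr}\bar v)\;\ge\;0\qquad\text{for every }x\in\Lg_\C.
\]

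Denote by $A_v\colon\Lg_\C\to\Lg_\C$ the complex-linear operator $x\mapsto\ad_x^{tr}v$, and equip $\Lg_\C$ with the Hermitian inner product $(u,w)_h:=\ml u,\bar w\mr$. The key algebraic identity replacing the skew-adjointness of $\ad_v$ available in the $\Ad$-invariant setting of Section~\ref{sec: ricci} is
\[
A_v^{*}=-A_{\bar v},
\]
which follows in one line from $\ml\ad_x^{tr}v,y\mr=\ml v,[x,y]\mr$ together with antisymmetry of the bracket. Since $R$ is real symmetric, it is Hermitian with respect to $(\cdot,\cdot)_h$, so the displayed inequality reads $(R\,A_v x,\,A_v x)_h\ge 0$, which says that $R$ is Hermitian nonnegative on $\im(A_v)$; substituting $\bar x$ for $x$ in the same inequality gives, symmetrically, that $R$ is Hermitian nonnegative on $\im(A_{\bar v})$. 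In operator form, both $-A_{\bar v}RA_v=A_v^{*}RA_v$ and $-A_vRA_{\bar v}=A_{\bar v}^{*}RA_{\bar v}$ are nonnegative Hermitian operators on $\Lg_\C$.

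Finally, the definition of $R^\#$ gives
\[
R^\#(v,\bar v)=-\tfrac12\,\trace\bigl(A_vRA_{\bar v}R\bigr)=\tfrac12\,\trace\bigl((-A_vRA_{\bar v})\,R\bigr),
\]
and the image of the nonnegative Hermitian operator $-A_vRA_{\bar v}$ is contained in $\im(A_v)$, on which $R$ is Hermitian nonnegative. The slight extension of the trace lemma already used in Section~\ref{sec: ricci}---if $P$ is nonnegative Hermitian and $Q$ is Hermitian nonnegative on $\im(P)$, then $\trace(PQ)\ge 0$---therefore yields $R^\#(v,\bar v)\ge 0$, proving the claim. The main hurdle in this transfer from the $\Ad$-invariant setting is precisely the Hermitian adjoint identity $A_v^{*}=-A_{\bar v}$; once that is in hand, the remainder of the argument, including the reduction from ODE invariance to the infinitesimal claim via the $\varepsilon$-perturbation and $\partial_\infty S$, carries over from Section~\ref{sec: ricci} without change.
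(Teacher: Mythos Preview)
Your proof is correct and follows essentially the same route as the paper. The paper likewise reduces to the infinitesimal claim, differentiates $R(\Ad_{\exp(tx)}^{tr}v,\Ad_{\exp(t\bar x)}^{tr}\bar v)$ twice, uses the $x\mapsto ix$ trick to isolate $R(\ad_x^{tr}v,\ad_{\bar x}^{tr}\bar v)\ge 0$, observes that $-\ad_\cdot^{tr}\bar v\,R\,\ad_\cdot^{tr}v$ and its conjugate are nonnegative Hermitian, and finishes with the same trace lemma; your explicit verification $A_v^{*}=-A_{\bar v}$ is just the Hermitian reformulation of the paper's remark that $\ad_\cdot^{tr}v$ is skew adjoint with respect to $\ml\cdot,\cdot\mr$.
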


Combining the proposition with the maximum principle from the previous section and with the fact that 
the first summand in the Harnack ODE is a nonnegative operator 
Theorem~\ref{mainthm: har} clearly follows.

\begin{proof}[Proof of the Proposition.] As before we have to show.\\[1ex]
{\bf Claim.} If $R\in C(S,h)$ and   $v\in S$ 
with $R(v,\bar v)=0$, then $R^\#(v,\bar v)\ge 0$. \\[1ex]
Up to some necessary changes in notation the proof is the same as the one in section~\ref{sec: ricci}.
For convenience we repeat it here in the more general setting.

We extend the maps $\Lg\times\Lg\rightarrow \Lg$, $(x,v)\mapsto \ad_x^{tr}v$ to a complex bilinear 
map $\Lg_{\C}\times\Lg_{\C}\rightarrow \Lg_\C$ which we also denote by $(x,v)\mapsto \ad_x^{tr}v$.

Using that $S$ is invariant under $\Ad_{\G_{\C}}^{tr}$ we deduce that for any $x\in \Lg\otimes_{\R}\C$ 
we have  for all $t\in\R$
\[h\le R(\Ad_{\exp(t x)}^{tr}v ,\Ad_{\exp(t \bar x)}^{tr}\bar v)\]
with equality at $t=0$. 
Recall that $\Ad_{\exp(t x)}^{tr}=\exp(t\ad_x^{tr})$. Thus differentiating twice 
with respect to  $t$ and evaluating at $0$ gives
\[
 0\le 2 R(\ad_x^{tr} v ,\ad_{\bar x}^{tr}\bar v) +R(\ad_x^{tr}\ad_x^{tr} v , \bar v)+R(v ,\ad_{\bar x}^{tr}\ad_{\bar x}^{tr}\bar v )
\]
If we now replace $x$ by $i x$, then it is easy to see that the first summand in the above inequality remains unchanged 
while the other two summands change their sign. 

Therefore 
\begin{eqnarray}\label{eq: nonneg*}
 0&\le& R(\ad_x^{tr} v ,\ad_{\bar x}^{tr}\bar v) \mbox{ for all $x\in \Lg\otimes_{\R}\C$. }
\end{eqnarray}
In other words,
$ -\ad_\cdot ^{tr} {\bar v} R\ad_\cdot^{tr} v$ and its conjugate $-\ad_\cdot ^{tr} {v} R\ad_\cdot^{tr} \bar v$
 are nonnegative hermitian
operators on the unitary vectorspace $\Lg\otimes_{\R}\C$.

In order to establish
$\trace (-\ad_\cdot^{tr} v R\ad_{\cdot}^{tr}\bar v R)\ge 0$,
 it now suffices to show 
that $R$ induces a nonnegative sesquilinear form on the image of the nonnegative operator
$-\ad_\cdot^{tr} v R\ad_{\cdot}^{tr}\bar v$. 
Clearly the image is contained in the image of 
$\ad_\cdot^{tr} v$ and by \eqref{eq: nonneg*} $R$ is indeed nonnegative on it which completes the proof of the proposition.
\end{proof}

\begin{rem}\begin{enumerate}\item[a)] Using Remark~\ref{rem: k} and the proposition in the case that 
$\Lg$ is given by the Lie algebra of $\gU(n)\rtimes \C^n$ one can derive Harnack inequalities 
for K\"ahler manifolds. 
The complexification of $\gU(n)\rtimes \C^n$  is given by 
$\GL(n,\C)\rtimes (\C^n\oplus \C^n)$, where $\GL(n,\C)$ acts in the standard way on the first summand 
and by $(A,v)\mapsto (\bar{A}^{tr})^{-1}v$ on the  second $\C^n$-summand.
Thus the set
\[S:=\{(A,v,0)\in \gl(n,\C)\times \C^n\times\C^n\mid \rank(A)=1, v\in A(\C^n)\},\]
is invariant under the coadjoint
representation. This gives a Harnack inequality for K\"ahler manifolds with positive
bisectional curvature whose trace form is similar to Cao's [1992] Harnack inequality.
\item[b)] Let $(\Lg,\ml \cdot,\cdot\mr) $ be as in the proposition and let $G\colon \Lg\rightarrow\Lg$
denote a selfadjoint positive endomorphism. Put $g(v,w)=\ml \cdot,G\cdot\mr$. 
The ODE $R'=R^{\#_g}$ corresponding to the metric Lie algebra 
$(\Lg,g)$ is 
obtained by pulling back the corresponding 
ODE for the metric Lie algebra $(\Lg,\ml \cdot,\cdot\mr)$ under the linear map 
$S^2(\Lg,g)\rightarrow S^2(\Lg,\ml\cdot,\cdot\mr), R\mapsto RG^{-1}$.
Thus $R^{\#_g}=(RG^{-1})^\#\cdot G$.
\end{enumerate}
\end{rem}

\section{Some pinching results}\label{sec: pinching}

Theorem~\ref{mainthm: ricci} gives a large family of invariant nonnegativity conditions. 
We will see in this section that it can also be used to show that some nonnegativity conditions
pinch toward stronger nonnegativity conditions.

\begin{lem}\label{lem: pinching} Consider, a $\Ad_{\G_\C}$-invariant subset $S\subset \Lg_{\C}$. 
Then
\[
\partial_\infty S:= \{ Y\in \Lg_{\C}\mid\mbox{ there exists $\lambda_i\in \R$ and $v_i\in S$ with $\lambda_i\to 0$ and 
$\lambda_iv_i \to Y$}\}
\]
is a scaling invariant $\Ad_{\G_{\C}}$-invariant set, which we call, 
by slight abuse of notation, the boundary of $S$ at infinity.

\begin{enumerate}
 \item[a)] For any $h\in \R$ the set $C(S,h)$ is contained in  $C(\partial_\infty S,0)$.
\item[b)] The union $\bigcup_{h<0}C(S,h)$ contains all interior points of 
$C(\partial_\infty S,0)$.
\end{enumerate}
\end{lem}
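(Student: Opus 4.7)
The plan is to handle the three assertions separately: the invariance properties of $\partial_\infty S$ are essentially bookkeeping, part (a) follows from a short limit computation, and part (b) requires a rescaling/compactness argument in which the identity operator $I\in S^2(\Lg)$ is used to convert \emph{interiority} into a quantitative strict positivity estimate on $\partial_\infty S$. For the invariance, given any $Y=\lim_i \lambda_i v_i$ with $\lambda_i\in\R$, $\lambda_i\to 0$, $v_i\in S$: for $\mu\in\R$ the sequence $(\mu\lambda_i)v_i$ witnesses $\mu Y\in\partial_\infty S$, and for $g\in\G_\C$ the sequence $\lambda_i(\Ad_g v_i)$ witnesses $\Ad_g Y\in\partial_\infty S$, since $\Ad_g v_i\in S$ by hypothesis on $S$.

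For (a), let $R\in C(S,h)$ and write $Y=\lim \lambda_i v_i\in\partial_\infty S$. Since $\lambda_i\in\R$ one has $\overline{\lambda_i v_i}=\lambda_i \bar v_i$, so the complex bilinear extension gives $R(\lambda_i v_i,\lambda_i\bar v_i)=\lambda_i^2 R(v_i,\bar v_i)\ge \lambda_i^2 h$. The right-hand side tends to $0$ regardless of the sign of $h$, so passing to the limit yields $R(Y,\bar Y)\ge 0$ and hence $R\in C(\partial_\infty S,0)$.

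For (b), let $R$ be an interior point of $C(\partial_\infty S,0)$. The key step is to extract a workable quantitative form of interiority: the identity $I\in S^2(\Lg)$ satisfies $I(Y,\bar Y)=\|Y\|^2>0$ for every $Y\ne 0$ (with $\|\cdot\|$ the Hermitian norm on $\Lg_\C$), so interiority supplies $\delta>0$ with $R-\delta I\in C(\partial_\infty S,0)$, equivalently $R(Y,\bar Y)\ge \delta\|Y\|^2$ for all $Y\in\partial_\infty S$. With this in hand, suppose for contradiction that $R\notin C(S,h)$ for every $h<0$; then there exist $v_i\in S$ with $R(v_i,\bar v_i)\to -\infty$, which forces $\|v_i\|\to\infty$ since $R$ is bounded on bounded sets. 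Normalising $Y_i:=v_i/\|v_i\|$ and extracting a convergent subsequence yields $Y_i\to Y$ with $\|Y\|=1$, and the presentation $Y=\lim(1/\|v_i\|)v_i$ places $Y\in\partial_\infty S$. But $R(Y_i,\bar Y_i)=R(v_i,\bar v_i)/\|v_i\|^2<0$ for large $i$, whence $R(Y,\bar Y)\le 0$, contradicting $R(Y,\bar Y)\ge \delta>0$. The only real obstacle is this quantitative upgrade of interiority via the identity operator; once it is in place, homogeneity of $R(\,\cdot\,,\overline{\,\cdot\,})$ carries the rescaling argument through immediately.
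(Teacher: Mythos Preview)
Your proof is correct and follows essentially the same line as the paper. For (a) the paper argues by contrapositive (if $R\notin C(\partial_\infty S,0)$ then $R(v_i,\bar v_i)\to-\infty$) while you argue directly, but these are the same computation; for (b) the paper phrases the argument constructively---choosing a scaling-invariant neighborhood $U$ of $\partial_\infty S\setminus\{0\}$ on which $R(\cdot,\bar\cdot)>0$, observing $S\setminus U$ is bounded, and setting $h=\inf_{v\in S\setminus U}R(v,\bar v)$---whereas you run the equivalent contradiction via the identity operator, but the content (interiority $\Rightarrow$ uniform strict positivity on $\partial_\infty S$, and unbounded sequences in $S$ rescale into $\partial_\infty S$) is identical.
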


\begin{proof} {\em a).} Consider $R\notin C(\partial_\infty S,0)$. Then there is
 a $Y\in \partial_\infty S$ with $R(Y,\bar Y)=a <0$. 
Choose a sequence $v_i\in S$ and $\lambda_i\to 0$ with $\lambda_iv_i\to Y$. 
Then $R(v_i,\bar v_i)=-\tfrac{1}{\lambda_i^2}R(\lambda_iv_i,\lambda_iv_i)\to-\infty$.
Thus $R\notin C(S,h)$ for all $h$.

{\em b).} If $R$ is in the interior of $C(\partial_\infty S,0)$, 
then we can find a scaling invariant open neighborhood $U$ of $\partial_\infty S\setminus \{0\}$ 
such that $R(v,\bar v)> 0$ for all $v\in U$.
It is straightforward to check that the set $S'=\{v\in S\mid v\not\in U\}$ 
is bounded.
Thus if we put $h:=\inf_{v\in S'}R(v,\bar v)>-\infty$ we deduce $R\in C(S,\min\{0,h\})$.
\end{proof}

{\em Applications.} \begin{enumerate}
                     \item[a)] Nonnegative orthogonal bisectional  curvature pinches toward nonnegative bisectional curvature.
Let $(M,g(0))$ be  a compact K\"ahler manifold  with nonnegative orthogonal bisectional curvature:

Using a strong maximum principle 
it is not hard to see that under the Ricci flow either the orthogonal bisectional 
curvature turns positive immediately or the manifold is covered by a product or a symmetric space. 
A Hermitian symmetric space has nonnegative bisectional curvature and if $M$ is covered by a product one can argue for
each factor separately.
 Thus we may assume that $(M,g(t_0))$ has positive orthogonal bisectional curvature
and without loss of generality $t_0=0$.

We put $S=\{ X\in\gl(n,\C)\mid \trace(X)=\rank(X)=1 \}$.
It is easy to see that $\partial_\infty S$ is given by the space of nilpotent matrices
of $\rank \le 1$.
Thus $C(\partial_\infty S,0)$ corresponds to the space of curvature operators with nonnegative 
orthogonal bisectional curvature. 
>From the above Lemma we deduce that
$(M,g(t_0))$ satisfies the curvature condition $C(S,h)$ for some $h<<0$, that is 
$C(S,h)$ contains the compact set of curvature operators 
given by evaluating the curvature operator of $(M,g(0))$ at all base points. 

By Theorem~\ref{mainthm: ricci} we deduce that $(M,g(t))$ 
satisfies $C(S,h)$ for all $t$. 
This in turn implies that the bisectional curvature of $(M,g(t))$ stays bounded below by a fixed constant.
Since the scalar curvature blows up at a singularity this shows that 
$(M,g(t))$ pinches toward nonnegative bisectional curvature.

\item[b)]  
Let $L\in [0,\infty]$ and put 
\[
S(L):=\{X+zI\in \gl(n,\C)\mid  z\in \C,  |z|<L, \rank(X)=\trace(X)=1 \} 
\]
where $I$ denotes the identity matrix.
It is straightforward to check that 
$\partial_\infty S(L)$ is still given by the nilpotent rank 1 matrices, if $L<\infty$. 
Similarly to a) this in turn shows that nonnegative orthogonal bisectional curvature pinches 
toward the  curvature condition $C(S(\infty),0)$. 
In the case of $n=2$, $C(S(\infty),0)$ consists of the nonnegative K\"ahler curvature 
operators.

\item[c)] Suppose $n$ is even and put
$S=\{ X\in\so(n,\C)\mid X^2=-\id\}$. 
Then $\partial_\infty S=\{X\in\so(n,\C)\mid X^2=0\}$. 
As always $C(\partial_\infty S,0)$ pinches toward $C(S,0)$. 
If $n=4,6$, then $C(\partial_\infty S,0)$ coincides with nonnegative isotropic curvature.
In all even dimensions $n$
the manifold $\Sph^{n-1}\times\R$ satisfies $C(S,0)$ strictly.
\item[d)] Put $S=\{ X\in\so(n,\C)\mid\mbox{ the eigenvalues of $X$ have absolute value $\le 1$}\}$
Clearly $C(S,0)$ corresponds to the cone of nonnegative curvature operators.
Moreover $\partial_\infty S=\{ X\in\so(n,\C)\mid X^n=0\}$.
As before the curvature condition $C(\partial_\infty S,0)$ pinches toward $C(S,0)$. 
\end{enumerate}

\subsection{Manifolds satisfying PIC1.}\label{subsec: pic1}
Let $(M,g)$ be a compact manifold such that $\R\times M$ has positive isotropic curvature (PIC1).
Consider the subset $S\subset \so(n,\C)$ of rank 2
matrices with eigenvalues $\pm 1$. It is easy to see 
that $\partial_\infty S$ consists of all nilpotent matrices  $X$ in $\so(n,\C)$ of rank $\le 2$.
Moreover, such a matrix satisfies $X^3=0$, and if $X\neq 0$, then $\rank(X)=2$. 
Thus $(M,g)$ satisfies  $C(\partial_\infty S,0)$ strictly, see Remark~\ref{mainrem} d).
 By Lemma~\ref{lem: pinching} $(M,g)$ 
satisfies $C(S,h)$ for some $h<0$. 
By replacing $h$ by $h-1$ we may assume that $(M,g)$ satisfies $C(S,h)$ strictly.
We consider the linear map 
\[ 
l_s\colon S^2_B(\so(n))\rightarrow S^2_B(\so(n)),R\mapsto R+ 2s \Ric \wedge \id+(n-1)(n-2)s^2 R_I,
\]
where $R_I$ is the orthogonal projection of $R$ to multiples of the identity. 

Since  the  operators in $C(S,h)\subset C(\partial_\infty S,0)$ have nonnegative Ricci curvature,
we can apply [B\"ohm and Wilking 2008, Proposition 3.2], to see
that 
$l_s(C(S,h))$ defines a Ricci flow invariant curvature condition for positive $s\le \tfrac{\sqrt{2n(n-2)+4}-2}{n(n-2)}$. 

Clearly $(M,g(0))$ satisfies $l_s(C(S,h))$ for small $s>0$.
It is straightforward to check that the set $D= l_s(C(S,h))\setminus l_{s/2}(C(S,0))$ is bounded. 
Choose $L>0$ such that all operators in $D$ have trace $< L$.
We also assume that the scalar curvature of $(M,g)$ is bounded by $L$. 
Put
$K=\{ R\in l_{s/2}(C(S,0))\mid \trace(R)=L\}$. 
Notice that $K$ is a compact subset of the interior of $C(S,0)$. 
We now choose a convex, $\Or(n)$--invariant, ODE invariant set $F\subset l_{s/2}(C(S,0))$ with $K\subset F$ and 
$\lim_{\lambda\to \infty}\tfrac{1}{\lambda}F= \R_+I$. 
The existence of this set follows immediately from B\"ohm and Wilking [2008, proof of Theorem 3.1 combined with Theorem 4.1],
see [Theorem 6.1, Wilking 2007] -- here we used that  $C(S,0)$ is a Ricci flow invariant curvature 
condition in between nonnegative curvature operator and nonnegative sectional curvature.
We put 
\[
 \hat F := \Bigl(\{R\in F\mid \trace(R)\ge L\}\cap l_s(C(S,h))\Bigr) \cup \{R\in l_s(C(S,h))\mid \trace(R)\le L\}
\]
It is easy to see that $\hat F $ is convex and $O(n)$-invariant and ODE-invariant. 
Clearly $\lim_{\lambda\to \infty}\tfrac{1}{\lambda}\hat F=\lim_{\lambda\to \infty}\tfrac{1}{\lambda} F= \R_+I$.
Moreover by construction $(M,g)$ satisfies $\hat F $. By Theorem 5.1 in [B\"ohm and Wilking, 2008]
which is a slight extension of an earlier convergence result of Hamilton [1986],
$g$ evolves under the normalized Ricci flow to a constant curvature limit metric 
on $M$. 

This recovers the main theorem of [Brendle, 2008], which in turn generalized the main result of [Brendle and Schoen, 2009].

Part of the proof of Theorem~\ref{mainthm: bisec} in the next section 
is analogous to the above arguments. However, there are a few additional twists
which come from the fact that the ODE in the K\"ahler case behaves differently.

\section{K\"ahler manifolds with positive orthogonal bisectional curvature}\label{sec: bisec}
This section is devoted to the proof of Theorem~\ref{mainthm: bisec}. 

We first want to explain the equivalence of the two definitions we gave in the introduction.
We defined the cone of nonnegative orthogonal bisectional curvature 
as $C(S)$ where $S\subset \gl(n,\C)\cong \Lu(n)\otimes_\R\C$ is the space of nilpotent rank 1 
matrices. We claim $R\in C(S)$ if and only if  $K(v,w)+K(v,iw)\ge 0$ 
for all unit vectors $v,w$ satisfying $\C v\perp \C w$. 
Since a  complex rank one $n\times n$-matrix  is via an element in $\gU(n)$
conjugate to a matrix which is zero away from the upper $2\times 2$ block
it is clear that it suffices to explain the equivalence in the case of $n=2$. 
Using the natural embedding $\Lu(2)\subset \so(4)$ 
the nilpotent rank 1 matrices in $\gl(2,\C)$ correspond to 
totally isotropic rank 2 matrices in $\so(4,\C)$. 
A totally isotropic rank 2 matrix in $\so(4,\C)$ is contained in
an ideal $\su(2,\C)\subset \so(4,\C)$.    
It follows easily (for n=2) that $C(S)\cap S^2_K(\Lu(2))$ is given by the cone of those K\"ahler curvature operators in $S^2_K(\Lu(2))$
with nonnegative isotropic curvature. 

Let $R\in S^2_K(\Lu(2))$ with $n=2$ and 
$v,w\in \C^2\cong \R^4$ with $\C v\perp\C w$ then 
$R$ induces an endomorphism of $\so(4)$ such that 
$v\wedge w+ iw\wedge iv$ and
$v\wedge i w+ iv\wedge w$ are in the kernel of $R$. 
It is now easy to see that $K(v,iw)+K(v,w)=K(iv,w)+K(iv,iw)$ 
is a positive multiple of the isotropic curvature 
$R(v\wedge w- iw\wedge iv,v\wedge w- iw\wedge iv)+R(v\wedge i w- iv\wedge w , v\wedge i w- iv\wedge w)$. 
Thus nonnegativity of $K(v,iw)+K(v,w)$ for all possible choices $v$ and $w$
is equivalent to  $R\in C(S)$.

The Ricci flow on K\"ahler manifolds is particularly well behaved 
if the first Chern class is a multiple of the K\"ahler class. 
In our situation this will follow from 

\begin{lem}\label{lem: bochner} Let $M$ be a compact K\"ahler manifold with nonnegative orthogonal bisectional curvature. 
Then the Bochner operator on two forms is nonnegative. 
In particular any harmonic two form is parallel. 
If $(M,g)$ has positive orthogonal bisectional curvature, then $H^2(M,\R)\cong \R$.
\end{lem}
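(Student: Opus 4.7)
The plan is a Bochner argument. For a harmonic 2-form $\omega$ on a compact manifold, the Weitzenböck identity
\[
\Delta\omega=\nabla^*\nabla\omega+W(\omega)
\]
integrates to $\int_M(|\nabla\omega|^2+\langle W(\omega),\omega\rangle)\,dV=0$. If one establishes that the algebraic Weitzenböck operator $W$ on $\Lambda^2T^*_pM$ is pointwise nonnegative under the orthogonal bisectional curvature hypothesis, then both integrands vanish identically: this simultaneously proves the nonnegativity of the Bochner operator and forces every harmonic 2-form to be parallel and to take values in $\ker W$. The lemma is thus reduced to a purely pointwise algebraic statement about $W$.

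I would work at a fixed point $p$ and use the Kähler decomposition $\Lambda^2T^*_pM\otimes_\R\C=\Lambda^{2,0}\oplus\Lambda^{1,1}\oplus\Lambda^{0,2}$, which is preserved by $W$ because $W$ is $\gU(n)$-equivariant. Inside $\Lambda^{1,1}$ there is a further orthogonal splitting $\R\cdot\omega_0\oplus\Lambda^{1,1}_0$ isolating the Kähler form; since $\omega_0$ is parallel, $W(\omega_0)=0$ automatically. The task is then to prove $W\ge 0$ on $\Lambda^{1,1}_0$ and on $\Lambda^{2,0}\oplus\Lambda^{0,2}$.

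Under the identification $\Lu(n)\otimes_\R\C\cong\gl(n,\C)$ used throughout the paper, a primitive real (1,1)-form corresponds to a traceless Hermitian matrix $H$. Diagonalising $H=\diag(h_1,\dots,h_n)$ in a unitary frame $\{e_k\}$ and unpacking the Weitzenböck expression gives, up to positive universal constants,
\[
\langle W(\omega),\omega\rangle=\sum_{i\ne j}(h_i-h_j)^2\,R(E_{ij},\overline{E_{ij}}),
\]
where $E_{ij}=e_i\otimes e_j^*$ has rank one and satisfies $E_{ij}^2=0$, hence lies in the set $S$ of part (g) of the Main Remark. The orthogonal bisectional hypothesis is precisely $R(E_{ij},\overline{E_{ij}})\ge 0$, so $W\ge 0$ on $\Lambda^{1,1}_0$. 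An analogous expansion on $\Lambda^{2,0}\oplus\Lambda^{0,2}$ (via a standard Bochner-Kodaira computation for $(2,0)$-forms, identified with antisymmetric matrices in $\gl(n,\C)$) reduces $\langle W(\omega),\omega\rangle$ to Ricci-type contractions that can similarly be written as a nonnegative combination of evaluations $R(X,\bar X)$ with $X\in S$. The principal obstacle is this explicit algebraic reorganisation, ensuring every summand is manifestly of the form $R(X,\bar X)$ with $X$ in $S$; once in place, the hypothesis yields the result instantly.

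For the final assertion, assume the orthogonal bisectional curvature is strictly positive. Then $(h_i-h_j)^2R(E_{ij},\overline{E_{ij}})>0$ unless all $h_i$ coincide, which for a traceless $H$ forces $H=0$; the analogous conclusion kills the $(2,0)$ summand. Hence $\ker W|_{\Lambda^2T^*_pM}=\R\cdot\omega_0$, so every harmonic 2-form is pointwise proportional to the Kähler form and $H^2(M,\R)\cong\R$.
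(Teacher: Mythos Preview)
Your plan and the $(1,1)$--part are essentially the paper's own argument: the paper writes the Bochner operator as $\mathcal R=\Ric\wedge\id-R$, splits $\Lambda^2T_pM=\Lu(n)\oplus\Lu(n)^\perp$, and on $\Lu(n)$ diagonalises to obtain exactly your formula
\[
2\ml\mathcal R\omega,\omega\mr=\sum_{j\ne k}(\lambda_j-\lambda_k)^2\bigl(K(v_j,v_k)+K(v_j,iv_k)\bigr),
\]
so that part is fine and matches the paper verbatim.

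The gap is in your treatment of $\Lambda^{2,0}\oplus\Lambda^{0,2}\cong\Lu(n)^\perp$. You correctly observe that $R$ vanishes there, so only $\Ric\wedge\id$ survives; the eigenvalues are $\tfrac{1}{2}(\mu_j+\mu_k)$ with $\mu_j=\Ric(v_j,v_j)$, and one must show $\mu_j+\mu_k\ge 0$ for $j\ne k$. You assert this ``can similarly be written as a nonnegative combination of evaluations $R(X,\bar X)$ with $X\in S$''. But by bipolar duality on $S^2_K(\Lu(n))$, a linear functional lies in the closed convex cone generated by $\{R\mapsto R(X,\bar X):X\in S\}$ \emph{if and only if} it is nonnegative on every operator with nonnegative orthogonal bisectional curvature. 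So your assertion is logically equivalent to the inequality $\mu_j+\mu_k\ge 0$ you are trying to prove, not a proof of it. Concretely, if you expand $\mu_1+\mu_2$ in a unitary frame you get
\[
K(v_1,iv_1)+K(v_2,iv_2)+2\bigl(K(v_1,v_2)+K(v_1,iv_2)\bigr)+\sum_{j\ge 3}\bigl(\cdots\bigr),
\]
and while the tail and the middle term are orthogonal bisectional curvatures, the holomorphic sectional terms $K(v_\ell,iv_\ell)$ are \emph{not} of the form $R(X,\bar X)$ with $X^2=0$; no direct rewriting makes each summand individually nonnegative.

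The paper supplies the missing step as its Claim~1: after peeling off the $j\ge 3$ terms it reduces to the case $n=2$, where $\mu_1+\mu_2=\tfrac{1}{2}\scal$. For a K\"ahler surface nonnegative orthogonal bisectional curvature coincides with nonnegative isotropic curvature, i.e.\ $R|_{\su(2)_+}$ is $2$--nonnegative, and this forces $\trace(R)\ge 0$, hence $\scal\ge 0$. That argument (or an equivalent one) is what your $(2,0)$--paragraph needs to contain.
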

\begin{proof}
For a Riemannian manifold the Bochner operator on two forms is given 
by $\mathcal{R}:=\Ric\wedge \id -R$ where $R$ is the curvature operator 
$\Ric$ is the Ricci curvature and $\Ric\wedge \id (e_i\wedge e_j)=\tfrac{1}{2}\bigl(\Ric(e_i)\wedge e_j+ e_i\wedge \Ric(e_j)\bigr)$.
Compare for example [Ni and Wilking, 2009].
If $R$ is the curvature operator of a K\"ahler manifold, 
then it is easy to see that $\Ric \wedge\id$ leaves the Lie algebra $\Lu(n)\subset \so(2n)$ 
invariant. Since the orthogonal complement of $\Lu(n)^\perp$
is contained in kernel of $R$, we can show $\mathcal{R}_{| \Lu(n)^\perp}\ge 0$ by establishing\\[1ex] 
{\bf Claim 1.} For a K\"ahler manifold with nonnegative orthogonal bisectional curvature
the following holds: If  $v_1,v_2\in T_pM$ are unit vectors with $\C v_1\perp \C v_2$, 
then $\Ric(v_1,v_1)+ \Ric(v_2,v_2)\ge 0$. \\[1ex]
We extend $v_1,v_2$ to a complex orthonormal basis $v_1,\ldots,v_n$ of $T_pM$. 
Then
\begin{eqnarray*}
\Ric(v_1,v_1)+ \Ric(v_2,v_2)&=&K(v_1,iv_1)+2K(v_1,v_2)+2K(v_1,iv_2)+K(v_2,iv_2)+\\&&+\sum_{j=3}^n K(v_1,v_j)+K(v_2,v_j)+K(v_1,iv_j)+K(v_2,i v_j)
\end{eqnarray*}

Since $R$ has nonnegative orthogonal bisectional curvature 
we know $K(v_j,v_k)+K(v_j,iv_k)\ge 0$ for $k\neq l$. 
In other words it suffices to show that the first four summands add up to a nonnegative number. 
This in turn is equivalent to establishing the claim for complex surfaces. 
But if $n=2$, then $\Ric(v_1,v_1)+ \Ric(v_2,v_2)=\tfrac{1}{2}\scal\ge 0$.

We can finish the proof of the first part of Lemma by establishing\\[1ex]
{\bf Claim 2.} $\mathcal{R}_{| \Lu(n)}\ge 0$.\\[1ex]
Suppose $\omega\in \Lu(n)$ is an eigenvector of $\mathcal{R}$. 
We can find an orthonormal basis $v_1,\cdots, v_n$ with $\C v_j\perp \C v_k$ for $j\neq k$
and real numbers $\lambda_i$ such that $\omega$ is given by 
$\omega=\sum_{j=1}^n \lambda_j v_j\wedge iv_j$.
One now checks by straightforward computation
\[
 2\ml\mathcal R \omega,\omega\mr=\sum_{j\neq k}^n (\lambda_j^2+\lambda_k^2)\bigl( K(v_j,v_k)+ K(v_j,iv_k)\bigr)
-2\sum_{j\neq k}\lambda_j\cdot \lambda_k R(v_j, iv_j,v_k,iv_k)
\]
Since $v_j\wedge v_k+ iv_k\wedge iv_j$ and
$v_j\wedge i v_k+ iv_j\wedge v_k$ are in the kernel of $R$ it follows from the first Bianchi identity
that $R(v_j, iv_j,v_k,iv_k)=K(v_j,iv_k)+ K(v_j,v_k)$. Hence
\[
 2\ml\mathcal R \omega,\omega\mr=\sum_{j\neq k}^n (\lambda_j-\lambda_k)^2\bigl(K(v_j,v_k)+ K(v_j,iv_k)\bigr)
\]
which is nonnegative as each summand is nonnegative.

This shows that $\mathcal R$ is nonnegative. Therefore any harmonic two form is parallel. 
If the orthogonal bisectional curvature is positive, it is easy to deduce that 
the kernel of $\mathcal R$ is given by multiples of the K\"ahler form and thus any harmonic two form is a multiple of
 the K\"ahler form.
\end{proof}

As before we consider the Lie algebra $\Lu(n)$ 
of skew hermitian $n\times n$ matrices endowed with the scalar product
$\ml u,v\mr=-\trace{u\cdot v}$. 
The vectorspace of K\"ahler curvature operators $S^2_K(\Lu(n))$ 
 can be naturally seen as a subspace 
of the space  $S^2(\Lu(n))$ of selfadjoint endomorphism of $\Lu(n)$. 
 
Given two hermitian endomorphisms $A,B\colon \C^n\rightarrow \C^n$
we let $A\star B\colon \Lu(n)\rightarrow \Lu(n)$ 
denote the self adjoint endomorphism of $\Lu(n)$ defined by 
\[
2\ml  A\star B u,v\mr =-\trace AuBv-\trace(AvBu)-\trace(Au)\trace(Bv)-\trace(Av)\trace(Bu).
\]
A straightforward computation shows that 
$A\star B$ is a K\"ahler curvature operator. 
We put $E=\id\star \id$. 
Then $E$ corresponds to the curvature operator of $\CP^n$
scaled such that the sectional curvature lies in the interval [1/2,2]. Thus
$E$ has the eigenvalue $1$ with multiplicity $n^2-1$ and the eigenvalue $n+1$ 
with multiplicity $1$. 
The operators of the form $A\star \id$ are precisely given by the orthogonal complement 
of the Ricci flat 
operators $\ml W\mr$ in $S^2_K(\Lu(n))$.

For $R\in S^2_K(\Lu(n))$ we let $\Ric(R)$ denote its Ricci curvature 
which we can view as a hermitian $n\times n$ matrix. 
We define a linear map 
\begin{eqnarray}\label{eq: lt}
l_{s}\colon S^2_K(\Lu(n))\rightarrow S^2_K(\Lu(n)),\,\, R\mapsto R+2s\Ric(R)\star\id +s^2\scal(R) E.
\end{eqnarray}

Similarly, to [B\"ohm and Wilking, 2008] we are interested in how the Ricci flow ODE 
changes if we pull it back under $l_s$.
It is not hard but tedious to derive a formula similar to the one in [B\"ohm and Wilking, 2008].
However, for our purposes here the following simple formula will be sufficient. 
\begin{lem}\label{lem: kaehler} For $R\in S^2_K(\Lu(n))$ put 
$D(s)(R)=l_s^{-1}(l_s(R)^2+l_s(R)^\#)-R^2-R^\#$. 
Then 
\[
 \tfrac{d}{ds}_{|s=0} D(s)(R)=D'(0)(R)=2\Ric(R)\star \Ric(R)
\]
\end{lem}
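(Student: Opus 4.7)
The plan is a first-order Taylor expansion in $s$ around $s=0$. Since $\Ric(R)$ and $\scal(R)$ are linear in $R$, and $A \mapsto A \star \id$ is linear, we can write $l_s$ as a polynomial in $s$ whose coefficients are linear operators on $S^2_K(\Lu(n))$:
\[
l_s = \id + s\, L_1 + s^2\, L_2, \qquad L_1(R) := 2\,\Ric(R)\star \id, \qquad L_2(R) := \scal(R)\, E.
\]
The Neumann series then yields $l_s^{-1} = \id - s\, L_1 + O(s^2)$.

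Next, the map $Q(R) := R^2 + R^\#$ is a homogeneous quadratic polynomial in $R$; denote by $T$ its associated symmetric bilinear form, so that
\[
Q(R + sA + s^2 B) = Q(R) + 2s\, T(R, A) + O(s^2).
\]
Substituting $A = L_1(R)$ gives $Q(l_s(R)) = Q(R) + 2s\, T(R, L_1(R)) + O(s^2)$, and composing with $l_s^{-1}$,
\[
l_s^{-1}\bigl(Q(l_s(R))\bigr) = Q(R) + s\bigl(2\,T(R, L_1(R)) - L_1(Q(R))\bigr) + O(s^2).
\]
Therefore $D(s)(R) = l_s^{-1}(Q(l_s(R))) - Q(R)$ satisfies
\[
D'(0)(R) = 4\,T\bigl(R,\,\Ric(R)\star \id\bigr) \;-\; 2\,\Ric(R^2+R^\#)\star \id.
\]
Observe that the second-order correction $s^2\scal(R)E$ in $l_s$ plays no role, which explains why $E$ and $\scal(R)$ do not appear in the stated conclusion.

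It then remains to verify the purely algebraic identity
\[
4\,T\bigl(R,\,\Ric(R)\star \id\bigr) \;-\; 2\,\Ric(R^2+R^\#)\star \id \;=\; 2\,\Ric(R)\star\Ric(R)
\]
in $S^2_K(\Lu(n))$. Both sides are $\gU(n)$-equivariant quadratic functions of $R$, so one option is to analyze them on the irreducible components of $S^2_K(\Lu(n))$; the more direct route is a brute-force computation using the explicit formula for $\star$, the K\"ahler Bianchi identity, and the definitions of $R^\#$ and the Ricci contraction. The expected cancellation pattern is that the contribution $R\cdot(\Ric(R)\star \id) + (\Ric(R)\star \id)\cdot R$ pairs with $\Ric(R^2)\star \id$, while the genuinely new term $R \# (\Ric(R)\star \id)$, after subtracting $\Ric(R^\#)\star \id$, produces precisely $\Ric(R)\star\Ric(R)$. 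This parallels the analogous computation in [B\"ohm--Wilking, 2008] for $S^2_B(\so(n))$, and is indeed the main obstacle: while conceptually transparent, it is the combinatorial bookkeeping here that the authors describe as tedious.
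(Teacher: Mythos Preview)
Your Taylor expansion is correct and gives the right first-order formula
\[
D'(0)(R)=4\,T\bigl(R,\Ric(R)\star\id\bigr)-2\,\Ric(R^2+R^\#)\star\id,
\]
but this is the trivial part. The entire content of the lemma is the algebraic identity you then state and do not prove. Saying that it ``can be done by brute force'' and sketching an ``expected cancellation pattern'' is not a proof; in particular your claim that the $\#$-cross-term produces exactly $\Ric(R)\star\Ric(R)$ after subtracting $\Ric(R^\#)\star\id$ is precisely the nontrivial statement, and you give no argument for it. As written, the proposal stops at the point where the work begins.

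The paper takes a genuinely different route that sidesteps the brute-force computation you anticipate. It first observes (analogously to [B\"ohm--Wilking, 2008]) that $D(s)(R)$ depends only on the Ricci part of $R$, not on the Weyl part. Polarizing, it suffices to check the identity when $\Ric(R_1)$ and $\Ric(R_2)$ are commuting rank-one projections, hence when $R$ is the curvature operator $R_k$ of $\CP^k\times\C^{n-k}$. For these one first checks that $D'(0)(R)$ and $2\Ric\star\Ric$ always have the same trace (a short direct calculation), which settles the case $k=n$. For general $k$ one uses that $D'(0)(R_k)_{|\Lu(k)}$ is independent of $n$, together with the known case $k=n$, to get agreement on $\Lu(k)$; both operators vanish on $\Lu(n-k)$; and on the off-diagonal block the restriction is a multiple of the identity by symmetry, so the trace equality finishes it. This argument replaces your unperformed combinatorial bookkeeping by a reduction to a one-parameter family of explicit examples plus a trace identity. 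Note also that the sentence in the paper about the computation being ``tedious'' refers to deriving the full B\"ohm--Wilking-type formula for $D(s)$, which the paper explicitly avoids; the proof of the present lemma is designed to be short.
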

\begin{proof}
We let $\ml W\mr\subset S^2_K(\Lu(n))$ denote the kernel of
$R\mapsto \Ric(R)$. For $R\in S^2_K(\Lu(n))$ the orthogonal projection $R_W$ 
of $R$ to $\ml W\mr $ is called the (K\"ahler)-Weyl part of $R$. 
As in the real case $\ml W\mr$ is an irreducible module 
and analogously to [B\"ohm and Wilking, 2008] one can show that $D(t)(R)$ is independent of $R_W$. 
We let \[B(s)(R_1,R_2)=\tfrac{1}{4}\bigl(D(s)(R_1+R_2)-D(s)(R_1-R_2)\bigr)\] denote the corresponding bilinear form.
Since any Ricci tensor is the sum of commuting rank one tensors 
it suffices to prove the corresponding statement  
for $B$ in the special case that $\Ric(R_1)$ and $\Ric(R_2)$ are commuting rank 1 matrices. 
Clearly we may assume that $\Ric(R_1)$ have $1$ as an eigenvalue. 
Using the polarization
\[B(s)(R_1,R_2)=\tfrac{1}{2}\bigl(D(s)(R_1+R_2)-D(t)(R_1)-D(s)(R_2)).\] 
we deduce that it suffices 
to prove the original statement for the following two special cases:
The statement holds for one curvature operator for which $\Ric(R)$ has rank 1 
 and the statement holds for one curvature operator for which
$\Ric(R)$ has rank 2 with 2 equal nonzero eigenvalues. 
In particular, it suffices to check that the statement holds 
in the case that $R$ is the curvature operator $R_k$ of 
$\CP^k\times \C^{n-k}$, $k=1,\ldots,n$.

It is straightforward to check that 
$D'(0)(R)$ and $2\Ric\star \Ric$ have the same trace.
Clearly this proves the formula for $R_k$ in the case $k=n$. 

Notice that 
$R_k^2+R_k^\#=(k+1)R_k$.
It is easy to see that $D'(0)(R_k)_{|\Lu(k)}$ is independent of $n$. 
Using that we know the formula in the case $k=n$ we deduce 
$D'(0)(R_k)_{|\Lu(k)}=2\Ric\star \Ric_{|\Lu(k)}$. 
Moreover it is easy to see that both operators 
contain the subalgebra $\Lu(n-k)$ in their kernel. 
It remains to check that  $D'(0)(R_k)$ restricted to $(\Lu(n-k)\oplus \Lu(k))^\perp$ 
vanishes. For symmetry reasons this restriction is given by 
a multiple of the identity. 
Combining this with the facts that  $D'(0)(R_k)$ and $2\Ric\star \Ric$ 
coincide on $\Lu(n-k)\oplus \Lu(k)$ and have the same trace the lemma follows.
\end{proof}

\begin{defin} We consider two subsets 
$C_1,C_2\subset S^2_K(\Lu(n))$ which are convex, closed and $\gU(n)$-invariant. 
Recall that we say that a K\"ahler manifold $(M,g)$ satisfies $C_i$ if the curvature operator
at each point is contained in $C_i$.
We say that $C_1$ defines a Ricci flow invariant curvature condition under 
the constraint $C_2$ if the following holds:
Any compact solution $(M,g(t))$ to the unnormalized K\"ahler Ricci flow $(t\in [0,T])$
satisfying $C_2$ at all times and satisfying $C_1$ at $t=0$, satisfies 
$C_1$ at all times.  
\end{defin}

One can carry over [see Chow and Lu, 2004] the proof of Hamilton's maximum principle 
to show
\begin{thm} Suppose for all $R\in C_1\cap C_2$ we have $R^2+R^\#\in T_RC_1$, 
then $C_1$ defines a Ricci flow invariant curvature condition under the constraint $C_2$.
\end{thm}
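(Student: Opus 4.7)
The plan is to mimic Hamilton's classical proof of the tensor maximum principle as presented in Chow--Lu (2004), with the sole modification that the ODE-invariance hypothesis on $C_1$ is only invoked at curvature values that can actually occur along the flow, namely points of $C_1\cap C_2$. Because the unperturbed PDE for the curvature operator under the K\"ahler Ricci flow still reads $\partial_t R=\Delta R+2(R^2+R^\#)$, and because $C_1$ is closed, convex and $\gU(n)$-invariant, all the standard ingredients (support functions, first-touching-time argument, invariance of tangent cones under the Laplacian) carry over verbatim once one checks that the ODE-vector $R^2+R^\#$ lies in the tangent cone exactly at the curvature operators that appear.

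First, I would encode $C_1$ via its family of supporting half-spaces: write $C_1=\bigcap_{\xi} \{R\colon \ml\xi,R\mr\le s_{C_1}(\xi)\}$, where $\xi$ ranges over outward unit normals and $s_{C_1}$ is the support function. Given a compact solution $(M,g(t))$, $t\in[0,T]$, satisfying $C_2$ everywhere and $C_1$ at $t=0$, suppose for contradiction that $t_0>0$ is the first time some fiber of the curvature leaves $C_1$, so that at some $p_0\in M$ and some outward normal $\xi_0$ at $R_0:=R(p_0,t_0)$ one has $\ml \xi_0,R_0\mr=s_{C_1}(\xi_0)$ and the spatial function $p\mapsto\ml\xi_0,R(p,t_0)\mr-s_{C_1}(\xi_0)$ attains its maximum (equal to $0$) at $p_0$. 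At such a point and time, $\partial_t\ml\xi_0,R\mr\ge 0$ and $\Delta\ml\xi_0,R\mr\le 0$, so from the evolution equation
\[
 \ml\xi_0,R^2+R^\#\mr\ge 0.
\]
This is the step where the constraint $C_2$ enters decisively: by hypothesis $R(p_0,t_0)\in C_2$, and it also lies in $\partial C_1\subset C_1$, hence $R_0\in C_1\cap C_2$. The theorem's hypothesis therefore applies and gives $R_0^2+R_0^\#\in T_{R_0}C_1$, i.e.\ $\ml\xi_0,R_0^2+R_0^\#\mr\le 0$.

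To turn the two weak inequalities into a contradiction one uses the standard $\varepsilon$-trick: fix $R_*\in\mathrm{int}(C_1)$ (any interior point, e.g.\ a multiple of the K\"ahler identity $E$), replace $C_1$ by the slightly shrunk test set $C_{1,\varepsilon}=\{R\colon \mathrm{dist}(R,R_*)\le \mathrm{dist}(C_1^c,R_*)-\varepsilon\}$, and compare $R(p,t)$ with the retreating family obtained by flowing supporting functionals outward at rate $\varepsilon$. Equivalently, one tracks $\phi(t)=\max_{p\in M}\mathrm{dist}(R(p,t),C_1)^2$ and, using a Hamilton-type ODE-comparison for the squared distance to a convex $\gU(n)$-invariant set together with the local Lipschitz estimate on $R^2+R^\#$, derives $\phi'(t)\le C\phi(t)$ with $\phi(0)=0$, forcing $\phi\equiv 0$. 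Crucially, this procedure perturbs only the comparison set $C_1$, never the actual solution, so the constraint $C_2$ continues to hold for free and no compatibility between the $\varepsilon$-perturbation and $C_2$ needs to be verified.

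The main technical obstacle is the usual one in maximum principles for vector bundles: justifying that $\Delta R$ at the touching point contributes non-positively to the support function, which uses $\gU(n)$-invariance of $C_1$ together with the fact that $R(\cdot,t_0)\in C_1$ in a spatial neighborhood of $p_0$ so that second-order terms involving the Christoffel symbols cancel correctly. This is exactly the content of the classical Hamilton/Chow--Lu calculation, and since neither the bilinear vector field $R^2+R^\#$ nor the structure of $C_1$ as a $\gU(n)$-invariant convex set are altered by the presence of the auxiliary constraint $C_2$, that part of the argument transfers without change.
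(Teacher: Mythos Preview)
Your proposal is correct and matches the paper's approach: the paper itself does not supply a proof but simply writes ``One can carry over [see Chow and Lu, 2004] the proof of Hamilton's maximum principle,'' and your outline is precisely a sketch of that Chow--Lu argument, with the key observation that at the first touching time the curvature lies in $C_1\cap C_2$ so the tangent-cone hypothesis applies. Your write-up in fact supplies more detail than the paper does; the only minor imprecision is the description of the shrunk set $C_{1,\varepsilon}$ (as written it is a small ball rather than a slight retraction of $C_1$), but your alternative Gronwall formulation via $\phi(t)=\max_p\mathrm{dist}(R(p,t),C_1)^2$ is the standard and correct way to make the argument rigorous.
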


As consequence of this and the previous Lemma we obtain
\begin{cor}\label{cor: pinching} Suppose $C\subset S^2_K(\Lu(n))$ is a convex $\gU(n)$-invariant set 
which is invariant under the ODE $R'=R^2+R^\#$ and contains the space of nonnegative K\"ahler curvature 
operators. Let $p\in (0,1)$ and put
\[
C_2(p)=\{R\in S^2_K(\Lu(n))\mid \Ric(R)\ge p\tfrac{\scal}{2n}\}
\]
Then there is an $s_0=s_0(p,C)>0$ such that  the set $l_s(C)$ defines a Ricci flow invariant curvature condition under
the constraint $C_2(p)$ for all 
$s\in [0,s_0]$, where $l_s$ is the linear map defined by \eqref{eq: lt}.
\end{cor}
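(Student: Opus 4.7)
I would invoke the constrained maximum principle stated just before the corollary: it suffices to verify, for every $R\in l_s(C)\cap C_2(p)$, the infinitesimal condition $R^2+R^\#\in T_R\,l_s(C)$. Writing $R=l_s(\tilde R)$ with $\tilde R\in C$ and using $T_R\,l_s(C)=l_s(T_{\tilde R}C)$, this is equivalent to
\[
\tilde R^2+\tilde R^\#+D(s)(\tilde R)\in T_{\tilde R}C.
\]
The ODE-invariance of $C$ places $\tilde R^2+\tilde R^\#$ in $T_{\tilde R}C$; and because $C$ is convex and contains the full cone of nonnegative K\"ahler curvature operators, a standard convex-combination argument (for any $Q\ge 0$ and $\lambda>0$, $t\lambda Q+(1-t)\tilde R\in C$, so $\lambda Q-\tilde R\in T_{\tilde R}C$; letting $\lambda\to\infty$ yields $Q\in T_{\tilde R}C$) shows $T_{\tilde R}C$ contains that entire cone. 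Thus everything reduces to proving
\[
D(s)(\tilde R)\ge 0\quad\text{as a K\"ahler curvature operator, for all }\tilde R\in C\cap l_s^{-1}(C_2(p)).
\]

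\textbf{Expansion and pinching transfer.} Lemma~\ref{lem: kaehler} together with $D(0)=0$ gives the Taylor expansion
\[
D(s)(\tilde R)=2s\,\Ric(\tilde R)\star\Ric(\tilde R)+s^2\,Q(s,\tilde R),
\]
where $Q$ is continuous in $s$ and quadratic in $\tilde R$. A short direct computation (write $A=B^2$ with $B\ge 0$ hermitian, so that $2\,\ml A\star A\,u,u\mr = -\trace(BuB)^2-|\trace(Au)|^2$) shows $A\star A\ge 0$ as a K\"ahler curvature operator for any hermitian $A\ge 0$. Since $l_s$ is the identity on the K\"ahler Weyl summand and equals $\id+O(s)$ on the complementary Ricci/scalar subspace, the pinching $\Ric(R)\ge\tfrac{p\,\scal(R)}{2n}\id$ transfers to $\tilde R$ for $s$ small: there exists $p'=p'(p,s)\in(0,p]$, tending to $p$ as $s\to 0$, with $\Ric(\tilde R)\ge\tfrac{p'\scal(\tilde R)}{2n}\id$; tracing forces $\scal(\tilde R)\ge 0$ and hence $\Ric(\tilde R)\ge 0$.

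\textbf{Uniform bound and edge stratum.} By quadratic homogeneity in $\tilde R$, the inequality $D(s)(\tilde R)\ge 0$ need only be established on the compact slice $K=\{\tilde R\in C\cap l_s^{-1}(C_2(p)):\|\tilde R\|=1\}$. Two regimes arise. On $K\cap\{\scal(\tilde R)>0\}$, the pinching forces $\Ric(\tilde R)\star\Ric(\tilde R)\ge c(p)\,E$ in the K\"ahler nonnegativity order, uniformly in a neighbourhood. On the limit stratum $\{\scal(\tilde R)=0\}$ the pinching yields $\Ric(\tilde R)=0$, whence $l_s(\tilde R)=\tilde R$; moreover, since the ODE $R'=R^2+R^\#$ preserves Ricci-flatness in the K\"ahler setting (the infinitesimal Calabi--Yau statement), $\Ric(\tilde R^2+\tilde R^\#)=0$ as well, so $l_s^{-1}$ fixes $\tilde R^2+\tilde R^\#$ and $D(s)(\tilde R)=0$ identically on this stratum. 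Choosing $s_0=s_0(p,C)>0$ small enough that $s\,\|Q(s,\cdot)\|_{C^0(K)}$ is dominated by the uniform lower bound $c(p)$ on $\Ric\star\Ric$ in the interior then delivers $D(s)(\tilde R)\ge 0$ for all $s\in(0,s_0]$. The main obstacle is precisely this uniform positivity near the limit stratum: the first-order term degenerates there, the $s^2 Q$-contribution must be controlled on its own, and the argument genuinely relies on the K\"ahler-specific identity $\Ric(\tilde R^2+\tilde R^\#)=0$ whenever $\Ric(\tilde R)=0$ in order to close the compactness argument at the boundary of the pinching cone.
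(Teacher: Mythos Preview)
Your reduction and overall strategy match the paper's exactly: pull the ODE back through $l_s$, use that $C$ is preserved and that it contains the nonnegative cone, and reduce to proving $D(s)(\tilde R)\ge 0$ under the (transferred) Ricci pinching. The pinching transfer $l_s^{-1}(C_2(p))\subset C_2(p')$ is also what the paper does.

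The gap is in your uniformity step. On the slice $K=\{\|\tilde R\|=1\}$ the lower bound $\Ric(\tilde R)\star\Ric(\tilde R)\ge c(p)\,E$ is \emph{not} uniform: as $\tilde R$ approaches the Ricci-flat stratum, $\Ric(\tilde R)\to 0$ and hence $\Ric\star\Ric\to 0$. Your edge observation that $D(s)(\tilde R)=0$ when $\Ric(\tilde R)=0$ is correct (indeed, the K\"ahler form is central in $\Lu(n)$, so $R^\#$ kills it, and together with $R^2\omega=R(R\omega)=0$ this gives $\Ric(R^2+R^\#)=0$), but it only forces the Weyl--Weyl part of the quadratic form $D(s)$ to vanish. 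Writing $\tilde R=\tilde R_W+\tilde R_\perp$ with $\tilde R_\perp$ determined by $\Ric(\tilde R)$, you still have a possible cross term $2B(s)(\tilde R_W,\tilde R_\perp)$ in $D(s)(\tilde R)$, of size $O(\|\Ric\|)$ on $K$; this dominates the leading positive term $2s\,\Ric\star\Ric=O(\|\Ric\|^2)$ near the stratum. So the inequality $2s\,\Ric\star\Ric+s^2Q\ge 0$ cannot be closed by the bound $s\|Q\|_{C^0(K)}\le c(p)$ you propose.

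What the paper uses (and what you are missing) is the stronger structural fact, recorded in the proof of Lemma~\ref{lem: kaehler}, that $D(s)(R)$ is \emph{independent of the Weyl part} $R_W$, just as in the real case of B\"ohm--Wilking. This kills the cross term $B(s)$ as well and yields the quantitative remainder estimate $\|O(s^2)\|\le C\,s^2\|\Ric(R)\|^2$. With this in hand one may normalise $\|\Ric(\tilde R)\|=1$ instead of $\|\tilde R\|=1$; the Ricci-flat stratum is then empty, the pinched set of Ricci tensors is compact with $\Ric>0$, and one gets a genuine uniform bound $\Ric\star\Ric\ge c(p)\,I$ which dominates $Cs\,I$ for $s$ small. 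That is exactly the paper's one-line conclusion that $2s\,\Ric\star\Ric+O(s^2)$ is positive on $C\cap C_2(p/2)$.
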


\begin{proof}
Put $X(s)(R)=l_{s}^{-1}(l_{s}(R)^2+l_{s}(R)^\#)$.
By the above Lemma 
\[
X(s)=R^2+R^\#+ 2s \Ric(R)\star \Ric(R) +O(s^2)
\]
where $O(s^2)$ stands for an operator satisfying 
$\|O(s^2)\|\le C s^2\|\Ric(R)\|^2$ for some constant $C>0$, $s\in [0,1]$.

We choose $s_0$ so small that $l_{s}^{-1}(C_2(p))\subset C_2(p/2)$ for all $s\in [0, s_0]$. 
Thus it suffices to check that 
$X(s)(R)\in T_RC$ for all $R\in C\cap C_2(p/2)$ and for  all small $s$.
Using  our estimate on $O(s^2)$ we can find $s_0$ such that 
the operator $2s\Ric(R)\star \Ric(R)+O(s^2) $ is positive for all $R\in C\cap C_2(p/2)$ and all $s\in (0,s_0]$.
Since the positive operators are contained in $C$ they are also contained in the 
tangent cone $T_RC$. 
By assumption $R^2+R^\#\in T_RC$ and thus $X(s)(R)\in T_RC$ for all $R\in C\cap C_2(p/2)$.

\end{proof}

\begin{proof}[Proof of Theorem~\ref{mainthm: bisec}]
We consider a solution
to the unnormalized Ricci flow $(M,g(t))$, $t\in [0,T)$. Since the scalar curvature is positive ($n\ge 2$),
 a finite time singularity $T$ occurs.
By Lemma~\ref{lem: bochner} we have $H^2(M,\R)\cong \R$. Thus the first Chern class is 
a multiple of the K\"ahler class. Since a finite time singularity occurs this in turn implies 
it is positive.\\[1ex] 
{\bf Claim.} For some $\eps, p>0$ we have that $(M,g(t))$ satisfies $C_2(p)$ (see Corollary~\ref{cor: pinching}
for a definition) for all $t\in [T-\eps,T)$.\\[1ex]
By Lemma~\ref{lem: pinching} we can find some small $h$ such that $(M,g(t))$ 
satisfies $C(S,h)$, where $S\subset \gl(n,\C)$ is the set of rank 1 matrices which have $1$ as an eigenvalue.

We argue by contradiction and assume that we can find $p_i\to 0$ and $t_i\to T$ 
such that $(M,g(t_i))$ does not satisfy $C_2(p_i)$. 
We rescale the manifold to have maximal curvature one. 
By an argument of Perelman which was written up by Sesum and Tian [2006] 
$(M,\lambda_ig(t_i))$ 
subconverges to a compact  limit manifold $(M,g_\infty)$.
 
$(M,g_\infty)$ is a K\"ahler manifold satisfying the curvature condition 
$\lim_{i\to \infty} \tfrac{1}{\lambda_i} C(S,h)=C(S,0)$. 
Recall that $C(S,0)$ is the cone of curvature operators with nonnegative bisectional curvature. 
Thus $(M,g_{\infty})$ has nonnegative  bisectional curvature
and in particular nonnegative Ricci curvature. By compactness 
we can assume that $g_\infty$ has a backward solution to the Ricci flow
with nonnegative bisectional curvature.
Since $(M,g_{\infty})$ is diffeomorphic to $M$ we know 
from Lemma~\ref{lem: bochner} that its second homology is isomorphic to $\R$. 
Thus $(M,g_\infty)$ does not have any flat factors. 
Combining with the strong maximum principle we deduce that $(M,g_\infty)$ has positive Ricci curvature. 
But this contradicts our choice of $(M,g(t_i))$.

After replacing $g(0)$ by $g(T-\eps)$ we may assume that
$(M,g(t))$ satisfies $C_2(p)$ for all $t\in [0,T)$. 
Moreover we can assume that the curvature operator of $(M,g(0))$ at each point 
is contained in the interior of $C(S,h)$ -- otherwise one can just replace $h$ by $h-1$.

This in turn shows that  $(M,g(0))$ satisfies $l_s(C(S,h))$ for  sufficiently small $s>0$. 
By Corollary~\ref{cor: pinching}  $(M,g(t))$ satisfies $l_s(C(S,h))$ for all $t\in [0,T]$ and some $s>0$. 
It is elementary to check that 
there is an $\eps>0$ and $C>0$ such that for all $R\in l_s(C(S,h))$
we have 
\[
 R  -(\eps \scal(R)-C)E\in C(S,0)
\]
Since for the unnormalized Ricci flow the scalar curvature of $(M,g(t))$ converges uniformly to $\infty$ for $t\to T$,
we deduce that  $(M,g(t))$ has positive bisectional curvature for some $t$.

\end{proof}

\section*{Appendix: Strong maximum principle for the Ricci flow.}

In this appendix we will sketch the argument for the following extension of Brendle and Schoen's
maximum principle.
\begin{thm} Let $S\subset \so(n,\C)$ be an $\Ad_{\SO(n,\C)}$--invariant subset 
and consider a solution to the Ricci flow $(M,g(t))$, $t\in [0,T)$, satisfying
$C(S)$ for all $t$.
By choosing a linear isometry between $(T_pM,g(t))$ and $\R^n$
we obtain a subset $S(p,t)\subset \so((T_pM,g(t)))\otimes_\R\C$
corresponding to $S$ for each $(p,t)$.
Put 
\[
 N(p,t)=\{X\in S(p,t)\mid R_{g(t)}(X,\bar X)=0\}
\]
Then $N(p,t)$ is invariant under parallel transport for $t>0$.
\end{thm}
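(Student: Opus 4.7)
The plan is to adapt Brendle and Schoen's strong maximum principle to this general setting, using the algebraic input already established in the proof of Theorem~\ref{mainthm: ricci}. Fix $t_0 > 0$, $p_0 \in M$, and $X_0 \in N(p_0, t_0)$. Since $S$ is $\Ad_{\SO(n,\C)}$-invariant, the parallel transport of $X_0$ along any curve in $M$ at time $t_0$ (using the $g(t_0)$-Levi-Civita connection, extended $\C$-linearly to $\so(T_\cdot M)\otimes_\R\C$) automatically lies in $S$; what remains is to show that $R_{g(t_0)}(\cdot,\bar\cdot)$ continues to vanish on the transported vector. By continuity in the path, this reduces to a local statement near $(p_0, t_0)$.

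In a small spacetime neighborhood of $(p_0, t_0)$, fix a compact subset $K \subset S$ (a slice transverse to scaling, containing a representative of $X_0$), and identify it with a subset of $\so(T_pM, g(t))\otimes_\R\C$ via $g(t)$-orthonormal frames. Define
\[
F(p,t) := \inf\bigl\{R_{g(t)}(Y,\bar Y) : Y \in K(p,t)\bigr\}.
\]
Then $F$ is continuous, nonnegative (by $C(S)$-invariance), and $F(p_0,t_0) = 0$. At each $(p,t)$ the infimum is attained by some minimizer $X(p,t) \in K(p,t)$, and wherever $F(p,t) = 0$ one has $R(X,\bar X) = 0$, so the reaction-term inequality proved in Section~\ref{sec: ricci} applies: $(R^2 + R^\#)(X,\bar X) \geq 0$.

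Next, using the evolution $\partial_t R = \Delta R + R^2 + R^\#$ together with a commutation of $\Delta$ with evaluation on a parallel-in-space extension of the minimizer $X$, I would show that $F$ is a viscosity supersolution of a linear degenerate parabolic inequality
\[
\partial_t F - \Delta F - \langle b, \nabla F\rangle + CF \geq 0
\]
in a parabolic neighborhood of $(p_0, t_0)$, with bounded coefficients $b$ and $C$. Bony's strong maximum principle for such degenerate parabolic operators then forces $F$ to vanish on a full spatial neighborhood of $p_0$ at time $t_0$, and this vanishing is precisely the local invariance of $N$ under spatial parallel transport. Letting $p_0 \in M$ and $X_0 \in N(p_0, t_0)$ vary gives the global conclusion.

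The main obstacle is the derivation of the supersolution inequality for $F$. The reaction-term bound from Theorem~\ref{mainthm: ricci} only applies at vectors $X$ where $R(X,\bar X) = 0$ equals the minimum value on $S$, so one must use the freedom to rotate the minimizer within its $\SO(T_pM,g(t))$-orbit in order to absorb the derivatives of the (only continuously varying) family $X(p,t)$ into the first-order coefficient $b$. This is exactly the technical device developed by Brendle and Schoen for nonnegative isotropic curvature, and it transfers to the present setting because the two algebraic ingredients it requires, namely the $\Ad_{\SO(n,\C)}$-invariance of $S$ (supplying the rotational freedom) and the pointwise reaction-term inequality at null vectors (supplying the supersolution property), are both available from our hypotheses and from the proof of Theorem~\ref{mainthm: ricci}.
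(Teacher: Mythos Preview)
Your outline correctly identifies the two inputs (the reaction--term estimate from the proof of Theorem~\ref{mainthm: ricci} and Brendle--Schoen's strong maximum principle) and is close in spirit to the paper's argument, but the specific inequality you aim for cannot be obtained from those inputs.

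The difficulty is the form of the supersolution inequality. You want a \emph{linear} lower bound $(R^2+R^\#)(X,\bar X)\ge -C\,F(p,t)$ at a minimizer $X$, so that $F$ satisfies $\partial_t F-\Delta F-\langle b,\nabla F\rangle+CF\ge 0$ on the base. But the computation in Section~\ref{sec: ricci} does not give this. What it gives, for \emph{any} $v$ (not only those with $R(v,\bar v)=0$), is a lower bound on $R^\#(v,\bar v)$ in terms of the \emph{second variation of $u(w)=R(w,\bar w)$ along the $\Ad_{\SO(n,\C)}$--orbit through $v$}: concretely, one finds $R^\#(v,\bar v)\ge C\|R\|\inf_{\|x\|\le 1}\tfrac{d^2}{dt^2}\big|_{t=0}u(\Ad_{\exp(tx)}v)$. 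These fiber--direction second derivatives cannot be absorbed into a first--order term $b\cdot\nabla F$ on $M$; they are genuinely second order and live in directions transverse to the base. (Your ``rotational freedom'' trick handles the non-smooth dependence of the minimizer, which is a separate issue; it does not convert a Hessian into a gradient.)

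The paper therefore does \emph{not} pass to an infimum function on $M\times(0,T)$. Instead it works with $u(v)=R(v,\bar v)$ as a function on the total space $T$ of the bundle with fiber $S(p,t)$, where the fiber Hessian is an honest second--order term. The evolution equation together with the estimate above yields, in local coordinates on a relatively compact piece of $T$, an inequality of the form
\[
\sum_{i=1}^n\tilde X_i\tilde X_i\tilde u\;\le\;-K\inf_{\|a\|\le 1}\Hess(\tilde u)(a,a)+K\|\grad(\tilde u)\|,
\]
which is exactly the degenerate elliptic hypothesis of Proposition~4 in Brendle--Schoen~[2008]. That proposition then shows that the zero set $u^{-1}(0)$ is invariant under the flows of the horizontal lifts $\tilde X_i$, i.e.\ under spatial parallel transport. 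So the correct route is to lift to the bundle rather than to project to the base.
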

As usual with strong maximum principles
we do not require that $(M,g(t))$ is compact or complete. 
The methods used to derive the above theorem from Theorem~\ref{mainthm: ricci} (and its proof) 
are due to Brendle and Schoen [2008]. 
In fact,
 Proposition~8 
in that paper is the special case of the above theorem
where $S$ is given by the totally isotropic rank 2 matrices.
The analogue of the above theorem for K\"ahler manifolds holds as well with the same proof.

A delicate part of Brendle and Schoen's proof of the strong maximum principle for isotropic curvature
is that it does not just use the invariance of positive isotropic curvature 
but also the proof of the invariance by means of  first and second variation formulas.
This is here true as well.

\begin{proof}
In the following we can assume that $S$ is invariant under scaling with positive numbers. 
Moreover we may assume that for $X$, $Y\in S$ there is some 
$g\in \SO(n,\C)$ and $\lambda>0$  with $\Ad_gX=\lambda Y$. 
In fact otherwise we decompose $S$ into subsets with this property and prove the theorem 
for each subset separately. 
Notice that these assumptions imply in particular that $S$ is a submanifold
with a transitive smooth action of $\SO(n,\C)\times \R_+$. 
Therefore $S(p,t)$ defines a bundle over $M\times (0,T)$
whose totalspace we denote by $T$.
We consider on $T$ the function 
\[ u\colon T\rightarrow \R,
\,\,  u(v)= R_{g(t)}(v,\bar v).\]

We lift the vectorfield $\tfrac{\partial}{\partial t}$ horizontally to a vectorfield on 
$T$ using the connection induced on $T$ by the connection  $\nabla$ on $M\times (0,T)$ from section~\ref{sec: har}.
We denote this horizontal lift again by $\tfrac{\partial}{\partial t}$.
Then

\[
\tfrac{\partial u}{\partial t}(v,\bar v)= \Delta_h u+ 2(R^2(v,\bar v)+ R^\#(v,\bar v)),
\]
where $\Delta_h u$ is the horizontal Laplacian which is defined 
as follows. Choose in a neighborhood of $(p,t)$ 
vectorfields $X_1,\ldots, X_n$ tangential to $M$ with $g(t)(X_i,X_j)=\delta_{ij}$ 
and put $Y_j=\nabla_{X_i}^{g(t)}X_i$. 
Let $\hat X_i$ and $\hat Y_i$ denote the horizontal lifts 
of $X_i$ and $Y_i$ to $T$. Then $\Delta_h u= \sum_{i=1}^n\hat X_i \hat X_iu-\hat{Y}_iu$.

From the proof of the invariance in section~\ref{sec: ricci} we can derive 
\[
R^\#_{g(t)}(v,\bar v)\ge C\|R\| \inf\bigl\{\tfrac{d^2}{dt^2} u(Ad_{\exp(tx)}v) \bigm| x\in \so((T_pM,g(t)),\C),\|x\|\le 1\bigr\}
 \]
for all $v\in T_pM$ where $C=C(n)$ is a constant.
We now introduce coordinates  on some relative compact subset $U\subset T$, which have an extension to 
 a neighborhood of $\bar U$. 
Then  the corresponding function $\tilde u$ in local coordinates, which is defined on some open subset $V\subset \R^k$, satisfies 
\[
\sum_{i= 1}^n \tilde X_i\tilde X_i \tilde u\le -K\inf\bigl\{\Hess(\tilde u)(a,a)\mid a\in\R^n, \|a\|\le 1\bigr\} +K\|\grad(\tilde u)\|
\]for some large constant $K$,
where $\tilde X_i$ denote the corresponding vectorfields in coordinates. 
We can now apply Proposition~4 from [Brendle and Schoen, 2008], to see 
that the level set $\tilde{u}^{-1}(0)$ is invariant under the (local) flows of the vectorfields $X_i$. 
Translating this back we obtain that the level set $u^{-1}(0)$ in $T$ is  
invariant under spacial parallel translation.
\end{proof}

\hspace*{1em}\\
\begin{footnotesize}
\hspace*{0.3em}{\sc University of M\"unster,
Einsteinstrasse 62, 48149 M\"unster, Germany}\\
\hspace*{0.3em}{\em E-mail addresses: }{\sf wilking@math.uni-muenster.de}
\end{footnotesize}


\begin{thebibliography}{Flat}
\bibitem[Ber]{Ber} M.~Berger, {\em Sur les groupes d'holonomie des vari\'et\'es \`a connexion affine et des vari\'et\'es riemanniennes,} Bull. Soc. France 83 (1955), 279-330.
\bibitem[BW1]{BW1} C. B\"ohm and B. Wilking, \textit{ Nonnegatively curved manifolds with finite fundamental groups admits metrics with positive Ricci curvature.}   Geom. Funct. Anal. \textbf{17} (2007), 665--681.
\bibitem[BW2]{BW} C. B\"ohm and B.  Wilking,  \textit{ Manifolds with positive curvature operators are space forms.}  Ann. of Math. (2)  \textbf{167}(2008),  no. 3, 1079--1097.

\bibitem[B1]{B1} S.~
Brendle, 
{\em A general convergence result for the Ricci flow in higher dimensions,}
 Duke Math. J. 145, No. 3, 585-601 (2008).


\bibitem[B2]{B2}
S. Brendle, {\em A generalization of Hamilton's differential Harnack inequality for the Ricci flow,}  J. Differential Geom.  82  (2009),  no. 1, 207-227.

\bibitem[BS1]{BS1} S. Brendle, R.~Schoen, {\em Manifolds with $1/4$-pinched curvature are space forms.}  J. Amer. Math. Soc.  22  (2009),  no. 1, 287-307.

\bibitem[BS2]{BS2} S. Brendle and R. Schoen, \textit{  Classification of manifolds with weakly $1/4$-pinched curvatures.}  Acta Math.  \textbf{200}, (2008),  no. 1, 1--13.



\bibitem[CRT]{CRT}
E. Cabezas-Rivas and P. Topping 
{\em The Canonical Expanding Soliton and Harnack inequalities for Ricci flow,} preprint 2009, arXiv:0911.5036.

\bibitem[Ca]{Ca} H.D.~Cao {\em 
On Harnack's inequalities for the K�hler-Ricci flow,}
 Invent. Math. 109, No.2, 247-263 (1992).


\bibitem[Ch]{Ch} X. Chen, {\em On K\"ahler manifolds with positive orthogonal bisectional curvature,} 
Adv. Math. {\bf 215}, No. 2, 427-445 (2007).
\bibitem[CLi]{ChLi} X. Chen, H. Li, {\em 
The K\"ahler-Ricci flow on K\"ahler manifolds with 2-non-negative traceless bisectional curvature operator.} 
Chin. Ann. Math. Ser. B 29 (2008), {\bf 5}, 543-556. 

\bibitem[CC]{ChCh} B. Chow, S.C.~Chu, {\em 
A geometric interpretation of Hamilton's Harnack inequality for the Ricci flow,}  Math. Res. Lett.  2  (1995),  no. 6, 701-718.

\bibitem[CK]{CK} B. Chow and D. Knopf, \textit{New Li--Yau--Hamilton inequalities
for the Ricci flow via sace time approach,} J. Differential Geometry {\bf 60} (2002), 1-54.

\bibitem[CL]{CL} B. Chow and P. Lu, \textit{ The maximum principle for systems of parabolic equations subject to an avoidance set.}  Pacific J. Math.  \textbf{214} (2004),  no. 2, 201--222.

\bibitem[GZ]{GZ} H.~Gu, and Z.~Zhang, {\em An extension of Mok's theorem on the generalized Frankel conjecture.}  Sci. China Math.  53  (2010),  no. 5, 1253-1264

\bibitem[H1]{H1} R.~Hamilton,
   \emph{Three-manifolds with positive Ricci curvature},
   J. Differential Geom. {\bf 17} (1982), 255-306.

\bibitem[H2]{H2} R.~Hamilton,
    \emph{Four-manifolds with positive curvature operator},
    J. Differential Geom. {\bf 24} (1986), 153-179.

\bibitem[H3]{H3}  R.~Hamilton, {\em The Harnack estimate for the Ricci flow,}
J. Differ. Geom. 37, No.1, 225-243 (1993). 






\bibitem[Mok]{Mok}  
N. Mok, {\em
The uniformization theorem for compact K�hler manifolds of nonnegative holomorphic bisectional curvature,} 
 J. Differ. Geom. 27, No.2, 179-214 (1988).

\bibitem[Mo]{Mo} S. Mori, {\em Projective manifolds with ample tangent bundles,}
Ann. of Math. (2) 110 (1979), no. 3, 593-606

\bibitem[Ng]{Ng}  H.~Nguyen \textit{ Invariant curvature cones and the Ricci flow.} Ph. D thesis. Australian National University, 2007.

\bibitem[Ng2]{Ng2} H.~Nguyen, {\em
Isotropic curvature and the Ricci flow,}
 Int. Math. Res. Not. 2010, No. 3, 536-558 (2010).

\bibitem[NWo]{NWo} L. Ni and J. Wolfson, \textit{ Positive complex sectional curvature, Ricci flow and the differential sphere theorem. } preprint, arXiv:0706.0332.

\bibitem[NWi]{NWi} L. Ni and B. Wilking, {\em Manifolds with 1/4-Pinched Flag Curvature,}  Geom. Funct. Anal.  20  (2010),  no. 2, 571-591. 

\bibitem[Pe]{Pe} G.~Perelman,
     \emph{The entropy formula for the Ricci flow and its geometric applications},
     arXiv:math.DG/0307245.



\bibitem[SeT]{SeT} N. Sesum, G. Tian, {\em
Bounding scalar curvature and diameter along the K\"ahler Ricci flow (after Perelman).} 
J. Inst. Math. Jussieu 7 (2008), no. 3, 575-587. 

\bibitem[Sh]{Shi} W.~Shi,
     {\em Deforming the metric on complete Riemannian manifolds},
      J. Differential Geom. {\bf 30} (1989), 223--301.
\bibitem[SY]{SY}
Y.T.~Siu, S.T.~Yau, {\em
Compact K\"ahler manifolds of positive bisectional curvature,}
Invent. Math. 59 (1980), no. 2, 189-204.
\bibitem[Wi]{Wi} B.~Wilking, {\em Nonnegatively and Positively curved manifolds,} 
Surveys in differential geometry. Vol. XI,  Int. Press, Somerville, MA, 2007, 25-62 and arXiv:0707.3091v1.
\end{thebibliography}
\end{document}